\DeclareMathOperator{\Pic}{\mathrm{Pic}}
\newtheorem{conjecture}[equation]{Conjecture} 
\newtheorem{theorem}[equation]{Theorem}
\newtheorem{question}[equation]{Question}
\newtheorem{lemma}[equation]{Lemma}
\newtheorem{corollary}[equation]{Corollary}
\theoremstyle{definition}
\newtheorem{definition}{Definition}[section]
\newtheorem{claim}[equation]{Claim}
\theoremstyle{remark}
\newtheorem{remark}[equation]{Remark}
\begin{document}
\begin{abstract}In this paper, we use canonical bundle formulas to prove various generalizations of an old theorem of Kawamata on the semiampleness of nef and abundant log canonical divisors. In particular, we show that for klt pairs $(X,B)$ with $K_X+B$ effective, $L \in \Pic X$ nef, nefness and abundance of $K_X+B+L$ implies semiampleness. This essentially extends Kawamata's theorem to the setting of generalized abundance.

\end{abstract}

\title[Nef and abundant divisors]{Nef and abundant divisors, semiampleness and canonical bundle formula}
\author{Priyankur Chaudhuri}
\address{Department of Mathematics, University of Maryland, College Park, MD 20742}
\email{pchaudhu@umd.edu}

\maketitle
 
\section{Introduction}
This paper will be concerned with the following:
\begin{question} 
Let $(X, B)$ be a klt pair and $ L \in \Pic X$ nef such that $K_X+B+L$ is nef and abundant, then is $K_X+B+L$ semiample?
\end{question}
This is clearly related to the following conjecture of Lazić and Peternell \cite{LP}.

\begin{conjecture} (Generalized Abundance) Let $(X,B)$ be a klt pair with $K_X+B$ pseudoeffective and $L$ a nef Cartier divisor on $X$ such that $K_X+B +L$ is also nef. Then there exists a semiample $\mathbb{Q}$-divisor $M$ on $X$ such that $K_X+B+L \equiv M$.
\end{conjecture}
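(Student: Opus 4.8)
The plan is to induct on the numerical dimension $\nu := \nu(K_X+B+L)$ and to use a canonical bundle formula to descend $K_X+B+L$ along its Iitaka fibration, reducing the conjecture to generalized nonvanishing together with a base-point-free theorem in the ``big'' range. Write $D := K_X+B+L$. It is nef by hypothesis, and pseudoeffective since $K_X+B$ is pseudoeffective and $L$ is nef. The two extreme values of $\nu$ are immediate. If $\nu = 0$, then a nef class of numerical dimension zero is numerically trivial, so $M = 0$ works. If $\nu = \dim X$, then $D$ is nef and big; since $aD - (K_X+B) = (a-1)D + L$ is nef and big for every $a \geq 2$, the Kawamata--Shokurov base-point-free theorem shows $aD$, and hence $D$, is semiample, and we may take $M = D$. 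The content therefore lies in the range $0 < \nu < \dim X$.

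For the intermediate range I would first establish generalized nonvanishing: that $D$ is numerically equivalent to an effective $\mathbb{Q}$-divisor, and more precisely that $D$ is \emph{abundant}, meaning $\kappa(X,D) = \nu(X,D)$ for an effective representative. Granting abundance, pass to a log resolution and let $f \colon X \to Z$ be the Iitaka fibration of $D$, so that $\dim Z = \nu$ and $D$ restricts to a divisor of Iitaka dimension zero on a general fiber $F$; abundance on $F$ then forces $D|_F \equiv 0$. I now view $(X, B, \mathbf{L})$ as a generalized pair whose nef part $\mathbf{L}$ descends $L$, and apply the canonical bundle formula for generalized lc-trivial fibrations (of Birkar--Zhang type) to $f$. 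This yields a generalized klt pair $(Z, B_Z, \mathbf{M}_Z)$ with $D \equiv f^{*}\!\left(K_Z + B_Z + M_Z\right)$, where $M_Z$ is the nef moduli part; the base class $D_Z := K_Z + B_Z + M_Z$ is nef (being the descent of the nef class $D$ under a surjection) and big (as $\dim Z = \nu$). Since $a D_Z - (K_Z + B_Z + M_Z) = (a-1) D_Z$ is nef and big for $a \geq 2$, the base-point-free theorem for generalized klt pairs makes $D_Z$ semiample, and $M := f^{*} D_Z \equiv D$ is the desired semiample class.

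The main obstacle is concentrated in the first sentence of the intermediate step, namely the abundance of $D$ --- equivalently the combination of generalized nonvanishing ($\kappa(X,D) \geq 0$) and the numerical triviality of $D$ on the general fiber of its reduction map. The fiberwise statement asks that $(K_F + B|_F) + L|_F$, which is nef of numerical dimension zero, actually be numerically trivial, and in this form it is the conjecture itself on the lower-dimensional $F$; I therefore expect to obtain it only by feeding the inductive hypothesis in dimension $< \dim X$ back into the fibers. Closing the induction unconditionally requires a genuinely new generalized nonvanishing theorem for nef pseudoeffective generalized log canonical classes: the canonical bundle formula supplies the descent and the base-point-free theorem supplies the output, but producing even a single section of a multiple of $D$ in the intermediate range is where the difficulty of Generalized Abundance genuinely resides.
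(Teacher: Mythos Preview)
The statement you are attempting is the Generalized Abundance \emph{conjecture} of Lazi\'c--Peternell; the paper states it as motivation and does not prove it, so there is no proof in the paper to compare against. What the paper proves are special cases (its Theorems~1.3 and~1.4, Corollaries~12 and~16) obtained by \emph{adding} the hypothesis that the relevant divisor is already nef and abundant, or that $\kappa(K_X+B)\ge 0$.

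Your proposal is, to your credit, honest about this: it is a reduction strategy rather than a proof, and you correctly isolate the obstruction as generalized nonvanishing/abundance of $D=K_X+B+L$. The extreme cases $\nu=0$ and $\nu=\dim X$ are fine. In the intermediate range your architecture---assume abundance, pass to the Iitaka fibration, apply a canonical bundle formula to produce a generalized klt pair on the base with big $D_Z$, then invoke a base-point-free theorem---is exactly the machinery the paper builds and applies under its extra hypotheses (Proposition~10, Corollary~12, Theorem~14). So strategically you and the paper agree; the paper simply assumes away the step you identify as open.

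Two technical points beyond the acknowledged gap. First, the canonical bundle formula requires $K_X+B+L\sim_{\mathbb{Q},f}0$, not merely $D|_F\equiv 0$; passing from numerical triviality on fibers to $\mathbb{Q}$-linear triviality over the base is itself delicate and is one of the issues the paper's Theorem~14 has to work around. Second, your inductive use of the conjecture on $F$ would yield only that $D|_F$ is \emph{numerically} equivalent to a semiample divisor, which does not give $D|_F\sim_{\mathbb{Q}}0$ and hence does not by itself license the canonical bundle formula. These do not change your overall diagnosis---the substance is in nonvanishing/abundance---but they show the induction does not close even formally.
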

\let\thefootnote\relax\footnote{2020 Mathematics Subject Classification: 14E30}

Note that we are not assuming pseudoeffectivity of $(K_X+ B)$ in our question.
There are two reasons why one might expect this to be true. Firstly, in \cite{PC}, the author has shown that generalized abundance does continue to hold for many uniruled varieties. Secondly, if $L \in \Pic X$ is a large multiple of a strictly nef divisor, then under the above assumptions, one can show that $K_X+B+L$ is ample. So it is natural to ask for a version in which the pseudoeffectivity of $K_X+B$ is not required. The surface case of this question is very easily seen to be true. We now state our main results which settle certain special cases. Essentially, they show that Question 1 has a positive answer in the following two situations:
\begin{enumerate}

\item If $L$ is a birational pushforward of a semiample divisor (in fact, it does not need to be nef here).

\item If $K_X+B \geq 0$. 

\end{enumerate}

\begin{theorem} Let $(X,B+\mathbf{M})$ be a generalized klt pair, $H$ a nef $\mathbb{Q}$-Cartier divisor on $X$ such that $H-(K_X+B+\mathbf{M}_X)$ is nef and abundant. Assume further that $\mathbf{M}$ is b-semiample, $\kappa(aH-(K_X+B+\mathbf{M}_X)) \geq 0$ and $ \nu(aH-(K_X+B+\mathbf{M}_X)) =\nu(H-(K_X+B+\mathbf{M}_X))$ for some $a \in \mathbb{Q}_{>1}$. Then $H$ is semiample.
\end{theorem}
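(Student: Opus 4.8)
The plan is to reduce the statement, via the Iitaka fibration of the nef and abundant divisor $D:=H-(K_X+B+\mathbf M_X)$, to an instance of the generalized base point free theorem on a lower-dimensional base, the reduction being carried out by a canonical bundle formula. Set $D_a:=aH-(K_X+B+\mathbf M_X)=D+(a-1)H$; since $a>1$ and $H,D$ are nef, $D_a$ is nef, and by hypothesis $\kappa(D_a)\ge 0$ and $\nu(D_a)=\nu(D)=:k$, the latter being $=\kappa(D)$ by abundance of $D$. By the structure theorem for nef and abundant divisors (Kawamata) there are a projective birational morphism $\mu\colon X'\to X$, with $X'$ smooth, and a contraction $g\colon X'\to Z$ with $\dim Z=k$, together with a big and nef $\mathbb Q$-divisor $D_Z$ on $Z$, such that $\mu^*D\equiv g^*D_Z$. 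Pulling back the generalized pair gives a generalized sub-klt pair with $K_{X'}+B'+\mathbf M_{X'}=\mu^*(K_X+B+\mathbf M_X)$, so that $\mu^*H=(K_{X'}+B'+\mathbf M_{X'})+\mu^*D$.

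The crucial first step is to show that $\mu^*H$ is $\mathbb Q$-linearly trivial on a general fibre $F$ of $g$. Writing $\mu^*D_a\equiv g^*D_Z+(a-1)\mu^*H$ as a sum of two nef classes and putting $j:=\nu(\mu^*H|_F)$, I would expand $(g^*D_Z+(a-1)\mu^*H)^{k+j}\cdot A^{\,n-k-j}$ for an ample $A$ on $X'$ (with $n=\dim X'$): every term is nonnegative, and the term $\binom{k+j}{k}(a-1)^j(g^*D_Z)^k\cdot(\mu^*H)^j\cdot A^{\,n-k-j}$ equals $\binom{k+j}{k}(a-1)^j(D_Z^k)\int_F(\mu^*H|_F)^j(A|_F)^{\dim F-j}>0$, using $(g^*D_Z)^{k+1}=0$, $(g^*D_Z)^k=(D_Z^k)[F]$, and $D_Z^k>0$. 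Hence $\nu(\mu^*D_a)\ge k+j$, and since $\nu(\mu^*D_a)=\nu(D_a)=k$ this forces $j=0$, i.e. $\mu^*H|_F\equiv 0$. Now $\mu^*D|_F\equiv g^*D_Z|_F=0$ while $\kappa(\mu^*D|_F)\ge 0$ by easy addition (since $\dim Z=\kappa(\mu^*D)$), so $\mu^*D|_F\sim_{\mathbb Q}0$; and restricting an effective representative of $\mu^*D_a$ to a general $F$ gives $\kappa(\mu^*D_a|_F)\ge 0$, which together with $\mu^*D_a|_F\equiv 0$ yields $\mu^*D_a|_F\sim_{\mathbb Q}0$. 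Subtracting, $(a-1)\mu^*H|_F=\mu^*D_a|_F-\mu^*D|_F\sim_{\mathbb Q}0$, hence $\mu^*H|_F\sim_{\mathbb Q}0$ and therefore $(K_{X'}+B'+\mathbf M_{X'})|_F\sim_{\mathbb Q}0$ as well.

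Next I would descend to $Z$. Since $K_{X'}+B'+\mathbf M_{X'}$ and $\mu^*D$ are both $\mathbb Q$-linearly trivial on a general fibre of $g$, a standard argument (Fujino–Gongyo) lets us, after replacing $X'\to X$ and $g\colon X'\to Z$ by suitable higher models, assume both are relatively $\mathbb Q$-trivial over $Z$, with $\mu^*D\sim_{\mathbb Q}g^*D_Z$ and $D_Z$ still big and nef. Applying the canonical bundle formula for generalized pairs (Ambro in the classical case) to this generalized sub-klt-trivial fibration, and using that $\mathbf M$ is b-semiample, one obtains a generalized klt pair $(Z,B_Z+\mathbf M_Z)$ with $\mathbf M_Z$ b-nef and $K_{X'}+B'+\mathbf M_{X'}\sim_{\mathbb Q}g^*(K_Z+B_Z+\mathbf M_{Z,Z})$; absorbing the $\mu$-exceptional part of $B'$ into the discriminant is a bookkeeping point handled exactly as in the proof of Kawamata's theorem. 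Setting $H_Z:=K_Z+B_Z+\mathbf M_{Z,Z}+D_Z$, we get $g^*H_Z\sim_{\mathbb Q}\mu^*H$, so $H_Z$ is nef (nefness descends along $g$) and $H_Z-(K_Z+B_Z+\mathbf M_{Z,Z})=D_Z$ is big and nef. The generalized base point free theorem on $Z$ then shows $H_Z$ is semiample, whence $\mu^*H\sim_{\mathbb Q}g^*H_Z$ is semiample and so is $H$. (If $k=\dim X$ then $D$ is big and one applies the generalized base point free theorem directly on $X$; if $k=0$ the argument degenerates to $\mu^*H\sim_{\mathbb Q}0$.)

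I expect the main obstacle to be this last, descent part: passing from $\mathbb Q$-triviality of $K_{X'}+B'+\mathbf M_{X'}$ on the generic fibre to relative $\mathbb Q$-triviality over $Z$, and running the generalized canonical bundle formula so as to land on a genuine generalized klt pair on $Z$ with b-nef moduli part — it is here that the hypotheses that $\mathbf M$ is b-semiample and that $(X,B+\mathbf M)$ is klt (so the boundary is effective and the generic fibre has mild singularities) are essential, and the argument leans on the full machinery of canonical bundle formulas for generalized pairs. By contrast the intersection-theoretic computation forcing $\nu(\mu^*H|_F)=0$ is elementary once the fibration provided by abundance of $D$ is available, and it is the only place where the hypotheses $\kappa(D_a)\ge 0$ and $\nu(D_a)=\nu(D)$ are used.
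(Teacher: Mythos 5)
Your overall strategy --- pass to the Iitaka fibration of $D=H-(K_X+B+\mathbf M_X)$, descend $H$ to the base $Z$, write the descended divisor as $K_Z+B_Z+\mathbf M_{Z,Z}+D_Z$ with $D_Z$ nef and big via a canonical bundle formula, and finish with a base point free theorem on $Z$ --- is the paper's strategy (and Fujino's proof of Kawamata's theorem). Your first step, the intersection computation forcing $\nu(\mu^*H|_F)=0$ and then $\mu^*H|_F\sim_{\mathbb Q}0$, is correct; it is exactly the content of Kawamata's Proposition 2.3, which the paper simply cites. Your use of the generalized canonical bundle formula applied directly to the generalized klt-trivial fibration (possible because $\mathbf M$ is b-semiample) is the alternative route the paper records in Remark 13 (via Filipazzi); the paper's own proof instead uses b-semiampleness to replace $\mathbf M_Y$ by a small general member of $|\mathbf M_Y|_{\mathbb Q}$ absorbed into the boundary, reducing to Ambro's classical theorem.

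However, the two points you dismiss as bookkeeping are where the actual work lies, and one of them is a genuine gap. First, to invoke either Ambro's or Filipazzi's theorem and get a b-nef moduli part you must verify the rank-one condition $\mathrm{rank}\, g_*\mathcal O_{X'}(\left\lceil \mathbf A(X',B')\right\rceil)=1$. This is not automatic: $\left\lceil -B'\right\rceil$ is an effective $\mu$-exceptional divisor, but $g$ does not factor through $X$, so $\mu$-exceptional divisors can be horizontal over $Z$ and the rank could a priori exceed $1$; the paper proves it equals $1$ by twisting by an ample, embedding into a twist by $mD_Z$ (using bigness of $D_Z$), and computing $H^0$. Second, and more seriously, the induced pair on $Z$ is in general only \emph{sub}-klt, not generalized klt as you assert: since $B'$ has negative coefficients along $\mu$-exceptional divisors, the thresholds $a_Q$ can exceed $1$ and $B_Z=\sum(1-a_Q)Q$ can have negative coefficients. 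Hence your concluding appeal to the generalized base point free theorem (which requires an effective boundary) fails as stated. One must use the sub-klt refinement (the paper's Theorem 9, due to Fujino), whose additional hypothesis $H^0(\left\lceil \mathbf A(Z,B_Z)\right\rceil+j\overline{\mathbf H_Z})\subset H^0(jH_Z)$ must then be checked; this is done via the inclusion $\mathcal O_Z(\left\lceil -\mathbf B+\mathbf D\right\rceil)\subset g_*\mathcal O_{X'}(\left\lceil \mathbf A(X',B')+g^*\mathbf D\right\rceil)$ and the chain of $H^0$ comparisons that occupies most of the paper's Proposition 10. Your skeleton is right, but the step ``one obtains a generalized klt pair on $Z$'' is false in general, and repairing it requires precisely this section-theoretic argument.
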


The case $\mathbf{M}=0$ of the above theorem is due to Kawamata \cite{Ka} and Fujino \cite{Fu}. Our approach of using canonical bundle formula is similar to that of Fujino's. 

\begin{theorem}
Let $(X, B)$ be a klt pair with $\kappa(K_X+B) \geq 0$, $H$ and $L$ nef $\mathbb{Q}$-Cartier divisors on $X$ such that $H-(K_X+B+L)$ is nef and abundant. Assume further that $\kappa(aH-(K_X+B+L)) \geq 0$ and $ \nu(aH-(K_X+B+L)) =\nu(H-(K_X+B+L))$ for some $a \in \mathbb{Q}_{>1}$. Then $H$ is semiample.
\end{theorem}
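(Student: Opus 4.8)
My plan is to adapt the canonical bundle formula argument that proves the preceding theorem, pushing $H$ forward along a fibration onto a lower-dimensional base and applying a basepoint-free theorem there. Set $P:=H-(K_X+B+L)$, nef and abundant by hypothesis, and $n:=\dim X$. If $\nu(P)=n$ then $P$ is big, hence $H-(K_X+B)=L+P$ is nef and big, and the Kawamata--Shokurov basepoint-free theorem applied to the klt pair $(X,B)$ and the nef $\mathbb Q$-Cartier divisor $H$ shows directly that $H$ is semiample. So I would henceforth assume $k:=\nu(P)<n$; as $P$ is abundant, $\kappa(P)=k$ too.

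To build the fibration I would invoke Kawamata's structure theorem for nef and abundant divisors (in the form used by Fujino, cf.\ \cite{Ka,Fu}): after a birational model $\mu\colon X'\to X$ there is a surjective morphism with connected fibres $f\colon X'\to T$ with $\dim T=k$ and $\mu^{*}P\sim_{\mathbb Q}f^{*}A$ for a nef and big $\mathbb Q$-divisor $A$ on $T$. Let $F$ be a general fibre. From the identity $\mu^{*}(aH-(K_X+B+L))=(a-1)\mu^{*}H+\mu^{*}P$ and $\mu^{*}P|_{F}\sim_{\mathbb Q}0$, the hypothesis $\nu(aH-(K_X+B+L))=\nu(P)=\dim T$ together with the nefness of $\mu^{*}H$ forces, by an intersection number count, $\mu^{*}H|_{F}\equiv0$; the hypothesis $\kappa(aH-(K_X+B+L))\geq0$ then gives $\kappa(\mu^{*}H|_{F})\geq0$, so $\mu^{*}H|_{F}\sim_{\mathbb Q}0$. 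Hence $\mu^{*}H$ is $\mathbb Q$-linearly trivial on general fibres of $f$.

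Next, let $K_{X'}+B'=\mu^{*}(K_X+B)$ be the crepant pullback. The hypothesis $\kappa(K_X+B)\geq0$ makes $K_{X'}+B'$ $\mathbb Q$-linearly effective, which I would use to run a relative minimal model program over $T$ and reduce to the case where $K_{X'}+B'+\mu^{*}L=\mu^{*}H-\mu^{*}P$ is $\mathbb Q$-linearly trivial over $T$ (its restriction to a general fibre being already trivial). Now the generalized canonical bundle formula applies to the generalized pair $(X',B'+\overline{\mu^{*}L})$ over $f$ --- here $\overline{\mu^{*}L}$ is b-nef because $L$ is nef --- and produces a generalized klt pair $(T,B_{T}+\mathbf N)$ with $\mathbf N$ b-nef $\mathbb Q$-Cartier such that $K_{X'}+B'+\mu^{*}L\sim_{\mathbb Q}f^{*}(K_{T}+B_{T}+\mathbf N_{T})$. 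Combining this with $\mu^{*}P\sim_{\mathbb Q}f^{*}A$ yields $\mu^{*}H\sim_{\mathbb Q}f^{*}H_{T}$ for $H_{T}:=K_{T}+B_{T}+\mathbf N_{T}+A$, which is nef (since $f^{*}H_{T}\sim_{\mathbb Q}\mu^{*}H$ is nef and $f$ is surjective) and satisfies $H_{T}-(K_{T}+B_{T}+\mathbf N_{T})\sim_{\mathbb Q}A$, nef and big. The generalized basepoint-free theorem for $(T,B_{T}+\mathbf N)$ then shows $H_{T}$ is semiample, whence so is $\mu^{*}H=f^{*}H_{T}$ and therefore $H$ (semiampleness descends along $\mu$, since $\mu^{*}H$ is $\mu$-trivial).

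I expect the crux to be the combination of the relative minimal model program with the canonical bundle formula: one must actually reach a model on which $K_{X'}+B'+\mu^{*}L$ is relatively $\mathbb Q$-trivial over $T$ (contracting the vertical part and upgrading the a priori only numerical equivalences furnished by Kawamata's structure theorem to honest $\mathbb Q$-linear ones), and one must check that the resulting discriminant $B_{T}$ can be chosen effective so that $(T,B_{T}+\mathbf N)$ is a bona fide generalized klt pair and the generalized basepoint-free theorem is applicable; it is for these steps, rather than for the numerical reductions, that the hypotheses $\kappa(K_X+B)\geq0$ and the klt-ness of $(X,B)$ are essentially used. The remaining verifications run parallel to the case $\mathbf M=0$ of Kawamata and Fujino.
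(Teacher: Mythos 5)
Your outline assumes away the paper's central difficulty. The step ``the generalized canonical bundle formula applies to the generalized pair $(X',B'+\overline{\mu^{*}L})$ \dots and produces \dots $\mathbf N$ b-nef'' is not a citable black box: b-nefness of the moduli part for a klt-trivial fibration twisted by an \emph{arbitrary} nef divisor is not known (the existing generalized canonical bundle formula, e.g.\ \cite{Fi}, is invoked in the paper only when the nef part is b-semiample, cf.\ Corollary 12 and Remark 13; for a merely nef $L$ there is no Hodge-theoretic structure to run Ambro's argument on). Establishing exactly this b-nefness is the content of the paper's Theorem 14, and that is where the hypothesis $\kappa(K_X+B)\geq 0$ is genuinely used: since $K_Y+B_Y$ is $\mathbb Q$-effective and $K_Y+B_Y+L_Y$ is relatively trivial, both summands are separately $\mathbb Q$-trivial on general fibres; then, via \cite[Lemma 3.1]{LP}, flattening, semistable reduction in codimension one and a Bloch--Gieseker cover, $L$ is shown to descend from the base ($L_{Y''}=f''^{*}L_{Z''}$), and only after that does the Ambro--Viehweg positivity argument (cyclic cover, eigensheaf decomposition of $\pi_{*}\omega_{V/Z}$, nefness of that sheaf) give that $\mathbf M_Z-L_Z$, hence $\mathbf M_Z$, is nef. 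Your sketch instead locates the role of $\kappa(K_X+B)\geq 0$ in the relative MMP and in making $B_T$ effective; that is not where it is needed, and effectivity of $B_T$ cannot be arranged in general.

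There are two further gaps in the surrounding reductions. First, you need the global relation $\mu^{*}H\sim_{\mathbb Q}f^{*}H_T$, and you propose to obtain the required relative $\mathbb Q$-triviality of $K_{X'}+B'+\mu^{*}L$ over $T$ by a relative MMP; but $\mathbb Q$-triviality on the general fibre does not give relative $\mathbb Q$-triviality, and an MMP would at best give relative semiampleness (itself conditional on relative good minimal models for a generalized pair) on a new model, not triviality on the model at hand. In the paper this step is precisely Kawamata's Proposition 2.3: the hypotheses $\kappa(aH-(K_X+B+L))\geq 0$ and $\nu(aH-(K_X+B+L))=\nu(H-(K_X+B+L))$ for some $a>1$ are there to produce $H_Y\sim_{\mathbb Q}f^{*}H_Z$ with $H_Z$ nef directly on the Iitaka-fibration model; you use these hypotheses only fibrewise, which is strictly weaker. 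Second, even granting the formula, the base pair is in general only \emph{sub}-klt ($B_T$ may have negative coefficients), so a generalized basepoint-free theorem for $(T,B_T+\mathbf N)$ does not apply as stated; the paper instead verifies the section-comparison condition $H^{0}(\lceil\mathbf A(Z,B_Z)\rceil+j\overline{\mathbf H_Z})\subset H^{0}(jH_Z)$ via Lemma 11 and then uses Fujino's sub-klt basepoint-free criterion (Theorem 9). Your preliminary numerical reductions (the big case, and $\mu^{*}H|_F\sim_{\mathbb Q}0$ on general fibres) are fine, but the proof as proposed does not go through without supplying the above.
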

This will be obtained as a consequence of a generalized canonical bundle formula (Theorem 14) to be proved in the third section.

\begin{remark} 
Thus in particular, if $(X,B)$ is a klt pair with $K_X+B$ effective and if $L \in \Pic X$ is nef such that $K_X+B+L$ nef and abundant, then $K_X+B+L$ is semiample. The non-vanishing conjecture (\cite[page 1] {LP}) predicts that assuming $K_X+B$ pseudoeffective would suffice for this to hold.
\end{remark}

\section{Preliminaries} We will always work with $\mathbb{Q}$-divisors.

\begin{definition}
[Abundant divisors] Let $X$ be a normal projective variety and $D$ be a  nef $\mathbb{Q}$-divisor with Iitaka dimension $\kappa(X,D)$. The \emph{numerical dimension} of $D$ is defined as $\nu(X,D)= max\{e|D^{e} \not\equiv 0\}$. In general, $\kappa(X,D) \leq \nu(X,D)$. $D$ is \emph{abundant} if $\kappa(X,D) = \nu(X,D)$. Examples include big nef divisors, semiample divisors and their pullbacks. 
\end{definition}

\begin{definition}
[b-divisors (see \cite{Co})] Let $X$ be a normal variety. A \emph{b-divisor} $\mathbf{D}$ on $X$ is a collection of Weil divisors $\mathbf{D}_Y$ on higher birational models $ Y \rightarrow X$ of $X$ that are compatible under pushforward. $\mathbf{D}_Y$ is called the \emph{trace} of $\mathbf{D}$ on $Y$. The \emph{Cartier closure} of a $\mathbb{Q}$-Cartier $\mathbb{Q}$-divisor $D$ on $X$ is the b-$\mathbb{Q}$ divisor $\overline{\mathbf{D}}$ with trace $\overline{\mathbf{D}}_Y :=\rho ^{*}(D)$ on a birational model $\rho: Y \rightarrow X$. A b-$\mathbb{Q}$-divisor $\mathbf{D}$ on a normal variety $X$ is \emph{b-$\mathbb{Q}$-Cartier} if it is the Cartier closure of a $\mathbb{Q}$-Cartier $\mathbb{Q}$-divisor on some birational model of $X$; in this situation, we say that $\mathbf{D}$ \emph{descends} to $Y$. $\mathbf{D}$ is called \emph{b-nef} (resp. \emph{b-semiample}) if it is descends to a nef (resp. semiample) $\mathbb{Q}$-divisor on some birational model over $X$. Let $(X,B)$ be a sub-pair consisting of a normal variety $X$ and a $\mathbb{Q}$-divisor $B$ on $X$ such that $K_X+B$ is $\mathbb{Q}$-Cartier. The \emph{discrepancy b-divisor} $\mathbf{A} = \mathbf{A}(X,B)$ is defined by $K_Y = f^*(K_X+B)+\mathbf{A}_Y$ for all models $Y \xrightarrow{f} X$. $(X,B)$ is called \emph{sub-klt} (resp \emph{sub-lc}) if all coefficients of $\mathbf{A}_Y(X,B)$ are $>-1$ (resp. $\geq -1$) for some (and hence, by \cite[Lemma 2.30]{KM}, for all) birational models $ f: Y \rightarrow X$. Thus in particular, if $X$ is smooth, all coefficents of $B$ are less than $1$ and $B$ has normal crossing support, then $(X,B)$ is sub-klt. In case $B \geq 0$, we drop the prefix sub. 
\end{definition}

\begin{definition}
[Generalized pairs and their singularities(see \cite{Bi})] A \emph{generalized sub-pair} $(X,B+\mathbf{M})$ consists of a normal projective variety $X$, a $\mathbb{Q}$-divisor $B$ and a $\mathbb{Q}$-b-divisor $\mathbf{M}$ on $X$ such that:
\begin{enumerate}
\item $K_X+B+\mathbf{M}_X$ is $\mathbb{Q}$-Cartier.
\item $\mathbf{M}$ is b-nef.
\end{enumerate}
When $B \geq 0$, we drop the prefix sub.\\

Let $(X,B)$ be a generalized sub-pair and $Y \xrightarrow{\mu} X$ a higher birational model of $X$. Let $B_Y$ be defined by $K_Y+B_Y+\mathbf{M}_Y = \mu^*(K_X+B+\mathbf{M}_X)$. We say that $(X,B+\mathbf{M})$ is \emph{sub-generalized klt} (resp. \emph{sub-generalized lc}) if every coefficient of $B_Y$ is less than $1$ (resp. less than or equal to $1$). We will call them sub-gklt and sub-glc in short. The \emph{generalized discrepancy b-divisor} $\mathbf{A}(X,B+\mathbf{M})$ is then defined by $\mathbf{A}_Y(X,B+\mathbf{M})=-B_Y$. Note that $\mathbf{M}_Y$ does not contribute to the singularities of $(Y,B_Y+\mathbf{M})$ once it descends to a nef divisor on $Y$, i.e. $\mathbf{A}(Y,B_Y+\mathbf{M}_Y)=\mathbf{A}(Y,B_Y)$ and thus $(Y,B_Y+\mathbf{M}_Y)$ is gklt (resp. glc) iff $(Y,B_Y)$ is klt (resp. lc).
\end{definition}
\begin{definition}
[Generalized klt-trivial fibrations] Let $(X,B)$ be a pair consisting of a normal variety $X$ and a $\mathbb{Q}$-divisor $B$ on $X$. A morphism $f:(X,B) \rightarrow Y$, $Y$ normal is \emph{ generalized klt-trivial} if
\begin{enumerate}
\item $f$ is surjective with connected fibers,
\item $(X,B)$ is sub-klt over the generic point of $Y$,
\item there exists a $\mathbb{Q}$-Cartier $\mathbb{Q}$-divisor $D$ on $Y$ and a nef $\mathbb{Q}$-divisor $L$ on $X$ such that $K_X+B+L \sim_{\mathbb{Q}}f^*D$
\item there exists a log-resolution $\pi^{'}: (X^{'}, B_{X^{'}}) \rightarrow (X,B)$ such that if $f^{'}: (X^{'},B_{X^{'}}) \rightarrow Y$ is the induced morphism, then rank $f^{'}_* \mathcal{O}_{X^{'}}(\left\lceil{-B_{X^{'}}}\rceil\right) =1$.
\end{enumerate}
\end{definition}
The following fact seems to be well-known to the experts. I will include a proof for the convenience of readers and for want of a reference. 

\begin{lemma}Let $f:(X,B) \rightarrow Y$ be a generalized klt-trivial fibration, $ \sigma : Y^{'} \rightarrow Y$ a generically finite morphism from a normal variety $Y^{'}$. Let $X^{'}$ denote a desingularization of the main component of $ X \times _Y Y^{'}$ and let $f^{'}:(X^{'},B_{X^{'}}) \rightarrow Y^{'}$ be the induced morphism. Then $f^{'}$ is generalized klt-trivial.
\begin{proof} Only item $(4)$ in the above definition needs to be verified. Since $(X^{'},B_{X^{'}})$ is sub-klt, the rank is surely $ \geq 1$. We only need to show it is $\leq 1$ and may assume that $X$ itself is smooth. Let $\pi: (X^{'}, B_{X^{'}}) \rightarrow (X,B)$ be the induced morphism. 

\begin{enumerate}
\item Suppose first that $ \sigma$ is birational. Then there exists an effective, integral, $\pi$-exceptional divisor $E^{'}$ on $X^{'}$ such that $K_{X^{'}}=\pi^*(K_X)+E^{'}$. The condition $K_{X^{'}}+B_{X^{'}}= \pi^*(K_X+B)$ then gives $B_{X^{'}}+E^{'}= \pi^*B_X$. If we can show that $\sigma \circ f^{'}: X^{'} \rightarrow Y^{'}$ satisfies rank $(\sigma \circ f^{'})_* \mathcal{O}_{X^{'}}(\left \lceil {-B_{X^{'}}} \right\rceil)=1$, then rank $f^{'}_* \mathcal{O}_{X^{'}}(\left\lceil{-B_{X^{'}}}\right\rceil)=1$ also since $\sigma \circ f^{'}$ and $f^{'}$ have the same general fiber. Thus we may assume $\sigma =id$ and thus $f \circ \pi = f^{'}$. Now,  

\begin{center} 
$f^{'}_*\mathcal{O}_{X^{'}}(\left\lceil{-B_{X^{'}}}\right\rceil)= f^{'}_*\mathcal{O}_{X^{'}}( \left\lceil{-\pi^*B_X+E^{'}} \right\rceil)$ 
\end{center}

\begin{center}
$=f^{'}_* \mathcal{O}_{X^{'}}( \left\lceil{-\pi^*B_X}\right \rceil +E^{'})$ (since $E^{'}$ is integral)
\end{center}

\begin{center}
$\subset f^{'}_* \mathcal{O}_{X^{'}}(\pi^* \left\lceil{-B_{X}}\right\rceil +E^{'})$ 
\end{center}
\begin{center}
$=f_* \pi_* \mathcal{O}_{X^{'}}(\pi^*(\left\lceil{-B_{X}}\right \rceil) +E^{'}) = f_*\mathcal{O}_X(\left \lceil {-B_X}\right\rceil) \otimes \pi_*\mathcal{O}_{X^{'}}(E^{'}))$
\end{center}
\begin{center}
$ =f_*\mathcal{O}_X(\left\lceil{-B_X}\right \rceil)$ (by Hartog's theorem since $E^{'}$ is $\pi$-exceptional).
\end{center}
Thus rank $f^{'}_*\mathcal{O}_{X^{'}}(\left \lceil {-B_{X^{'}}} \right \rceil) \leq 1$.

\item Now suppose that $\sigma$ is finite. In this case also, there exists an integral divisor $R^{'} \geq 0$ such that $K_{X^{'}} = \pi^*(K_X)+R^{'}$ (Supp ($R^{'}$) is the ramification locus of $\pi$). This gives $B_{X^{'}}+R^{'} =\pi^*B_X$. To show that rank $f^{'}_*\mathcal{O}_{X^{'}}(\left \lceil {-B_{X^{'}}} \right \rceil) =1$, it suffices to show that $R^{'}$ is not $f^{'}$-horizontal. Now \cite[Chapter 3, Proposition 10.1 (b)]{Hart} implies that $f^{-1}(Sm(\sigma)) \subset Sm (\pi)$ ($Sm$ denotes the smooth locus of the respective morphism) or in other words, $f(Branch(\pi))=f(\pi(R^{'})) \subset Branch (\sigma)$. Thus $R^{'}$ can not be $f^{'}$-horizontal.

\end{enumerate}

\end{proof}

\end{lemma}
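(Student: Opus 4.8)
The plan is to check, for $f'$, the defining conditions of a generalized klt-trivial fibration. Conditions $(1)$--$(3)$ are formal: surjectivity survives base change and desingularization; connectedness of the fibers of $f'$ follows from Stein factorization using normality of $Y'$ and the fact that the general fiber of $f'$ is birational to (hence as connected as) that of $f$; sub-kltness over the generic point of $Y'$ is preserved because the generic fiber of $f'$ is obtained from that of $f$ by a finite field extension and a desingularization, both of which preserve sub-kltness; and for $(3)$ one pulls $D$ back to $Y'$ and $L$ back to $X'$. So the content is condition $(4)$: as $(X',B_{X'})$ is sub-klt the rank there is automatically $\ge 1$, and --- since that rank is computed on the general fiber and $B_{X'}$ is defined crepantly, hence is unchanged if $X'$ is replaced by any model birational to it over $Y'$ --- it suffices to bound it by $1$, and I may freely modify $X'$ birationally over $Y'$. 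Following the hint in the statement I would first factor $\sigma$ through $Y''$, the normalization of $Y$ in $K(Y')$: then $Y'\to Y''$ is birational, $Y''\to Y$ is finite, the two-step base change reproduces $X'$ up to birational modification over $Y'$, and it is enough to treat the birational and finite cases separately.

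\emph{Birational case.} Using the freedom above I replace $(X,B)$ by a log resolution, so that $X$ is smooth, and let $\pi\colon X'\to X$ be the induced birational morphism; writing $K_{X'}=\pi^*K_X+E'$ with $E'\ge 0$ integral and $\pi$-exceptional, the relation $K_{X'}+B_{X'}=\pi^*(K_X+B)$ gives $B_{X'}+E'=\pi^*B_X$. Since $\sigma\circ f'=f\circ\pi$ and $f'$ and $\sigma\circ f'$ share a general fiber, it suffices to bound the rank of $(f\circ\pi)_*\mathcal{O}_{X'}(\lceil -B_{X'}\rceil)$. Using that $E'$ is integral, that $\lceil -\pi^*B_X\rceil\le\pi^*\lceil -B_X\rceil$ (because $\pi^*\lceil -B_X\rceil$ is an integral divisor dominating $-\pi^*B_X$), the projection formula and $\pi_*\mathcal{O}_{X'}(E')=\mathcal{O}_X$ by Hartog's theorem, one gets an injection
\[
(f\circ\pi)_*\mathcal{O}_{X'}(\lceil -B_{X'}\rceil)\hookrightarrow f_*\mathcal{O}_X(\lceil -B_X\rceil),
\]
whose target has rank $1$; hence the rank is $\le 1$, so $=1$.

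\emph{Finite case.} Again reduce to $X$ smooth; now $\pi\colon X'\to X$ is generically finite, and $K_{X'}=\pi^*K_X+R'$ with $R'\ge 0$ integral supported on the ramification locus of $\pi$, so $B_{X'}+R'=\pi^*B_X$ and $\lceil -B_{X'}\rceil=\lceil -\pi^*B_X\rceil+R'$. The crux is that $R'$ is $f'$-vertical: by \cite[Ch.~III, Prop.~10.1(b)]{Hart} the preimage of the smooth locus of $\sigma$ lies in the smooth locus of $\pi$, equivalently $f(\pi(R'))\subseteq\mathrm{Branch}(\sigma)$, a proper closed subset of $Y$, so $R'$ cannot dominate $Y'$. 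Once $R'$ is vertical it disappears on a general fiber of $f'$, which --- as $\sigma$ is generically étale in characteristic $0$ --- is isomorphic to a general fiber of $f$, with $\lceil -B_{X'}\rceil$ restricting to the pullback of $\lceil -B_X\rceil$; so $\mathrm{rank}\,f'_*\mathcal{O}_{X'}(\lceil -B_{X'}\rceil)=\mathrm{rank}\,f_*\mathcal{O}_X(\lceil -B_X\rceil)=1$. Combining the cases: the finite case applied to $Y''\to Y$ shows $f''\colon(X'',B_{X''})\to Y''$ is generalized klt-trivial, and the birational case applied to $Y'\to Y''$ then finishes the proof.

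\emph{Main obstacle.} Everything except the finite case is bookkeeping with round-ups, the projection formula and Hartog's theorem; the genuine point --- and the only spot where an external input is essential --- is the verticality of the ramification divisor $R'$ produced by the base change, i.e.\ locating the new singularities over $Y'$. The one further thing to pin down rather than assume is that the various constructions of $X'$ (one step versus two, and after replacing $X$ by a log resolution) all agree up to birational modification over $Y'$, and that such modifications leave the relevant rank unchanged because it lives on the general fiber and $B_{X'}$ is crepant; this is routine but should be stated explicitly.
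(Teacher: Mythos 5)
Your proposal is correct and follows essentially the same route as the paper: reduce everything to condition (4), split $\sigma$ into a birational and a finite part, handle the birational case by the round-up/projection-formula/Hartogs computation with the exceptional divisor $E'$, and handle the finite case by showing the ramification divisor $R'$ is $f'$-vertical via \cite[Chapter 3, Proposition 10.1(b)]{Hart}. The only differences are that you spell out the verification of conditions (1)--(3), the explicit factorization of $\sigma$ through the normalization of $Y$ in $K(Y')$, and the model-independence bookkeeping, all of which the paper leaves implicit.
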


\section{Nef and abundant divisors} 

We start off by proving two easy results. They provide positive evidence for Question 1. Recall that an \emph{almost strictly nef} (ASN) divisor means a birational pullback of a strictly nef Cartier divisor. See \cite {PC}, \cite{Zh}, \cite{HL} for some recent developments around ASN divisors.

\begin{lemma}
Let $X$ be a normal projective variety, $M \in \Pic X$ be almost strictly nef (ASN) and abundant. Then $M$ is big.
\begin{proof}
By \cite[Proposition 2.1]{Ka}, there exist morphisms $X \xleftarrow {\mu} X^{'} \xrightarrow{f} Y$, where $\mu$ is birational, $f$ a contraction and $X^{'}$ and $Y$ are smooth and projective such that 
\begin{equation} \label{5}
M^{'} := \mu^* M = f^*(M_Y)
\end{equation}
for some $M_Y \in \Pic Y$ nef and big. If $\dim Y =\dim X$, then $f$ is generically finite and $M$ is already big. So assume that $\dim Y < \dim X$. By definition of ASN, there exists $ X \xrightarrow{\pi} X_0$ birational and $M_0 \in \Pic X_0$ strictly nef such that $ \pi^*(M_0) = M$. Let $E \subset X$ be the exceptional locus of $ \pi$. Then for all curves $C \not \subset E$, $(M \cdot C)= ( \pi^*(M_0) \cdot C) >0$. If $F \subset X^{'}$ is a general fiber of $f$, then $F \not \subset E$. Then there is a curve $C \subset F$ such that $ C \not \subset E$ and thus $(M \cdot C) >0$. This is not possible by (\ref{5}). Therefore, $\dim Y = \dim X$ and $M$ is big.
\end{proof}
\end{lemma}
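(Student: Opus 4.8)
The plan is to feed $M$ into Kawamata's structure theorem for nef and abundant divisors and then use strict nefness to rule out a fibration with positive-dimensional fibres. Since $M$ is nef and abundant, \cite[Proposition 2.1]{Ka} supplies a diagram $X \xleftarrow{\mu} X' \xrightarrow{f} Y$ with $\mu$ birational, $f$ a contraction, $X'$ and $Y$ smooth projective, and a nef and big divisor $M_Y \in \Pic Y$ with $\mu^* M = f^* M_Y$; moreover $\dim Y = \nu(X,M) = \kappa(X,M)$. Since $\mu$ is birational and $X$ is normal we have $H^0(X, mM) = H^0(X', m\mu^* M)$ for all $m$, so $M$ is big iff $\mu^* M$ is big; as $M$ is nef this is the same as $\nu(X,M) = \dim X$, i.e. $\dim Y = \dim X$, so it suffices to prove this equality.

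First I would dispose of the easy case: if $\dim Y = \dim X$ then $f$ is generically finite, hence $\mu^* M = f^* M_Y$ is the pullback of a big divisor under a generically finite morphism and is therefore big, so $M$ is big. So assume for contradiction that $\dim Y < \dim X$, so a general fibre $F$ of $f$ has $\dim F \geq 1$. Using the ASN hypothesis, fix a birational morphism $\pi : X \to X_0$ and a strictly nef Cartier divisor $M_0$ on $X_0$ with $\pi^* M_0 = M$, and put $g := \pi \circ \mu : X' \to X_0$, so that $\mu^* M = g^* M_0$. The exceptional locus $E'$ of $g$ is a proper closed subset of $X'$, while the fibres of $f$ cover $X'$, so a general fibre $F$ is not contained in $E'$ and I can choose a curve $C \subset F$ with $C \not\subset E'$. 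Then $g(C)$ is a curve, and strict nefness gives $(\mu^* M \cdot C) = (g^* M_0 \cdot C) = (M_0 \cdot g_* C) > 0$; but $f$ contracts $C$, so $(\mu^* M \cdot C) = (f^* M_Y \cdot C) = 0$, a contradiction. Hence $\dim Y = \dim X$ and $M$ is big.

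The only delicate step — and the one I expect to be the main obstacle — is extracting the curve $C$: one needs a general fibre of $f$ that is not entirely contained in the exceptional locus of $g$, and then a curve inside it passing through a point off that locus. Both follow formally from the facts that $E'$ is a proper closed subvariety of $X'$ while the fibres of $f$ dominate $X'$, together with $\dim F \geq 1$, but this deserves to be spelled out rather than asserted. Conceptually, the abundance hypothesis is exactly what turns the numerical behaviour of $M$ into an honest contraction realizing $\mu^* M$ as a pullback from a big divisor, while strict nefness is what forbids that contraction from contracting any curve; neither hypothesis can be dropped.
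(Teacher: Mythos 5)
Your proof is correct and follows essentially the same route as the paper: apply Kawamata's structure result to get $\mu^* M = f^* M_Y$ with $M_Y$ nef and big, and if $\dim Y < \dim X$ use the ASN model to find a curve in a general fibre, not contained in the relevant exceptional locus, on which $M$ is both positive and zero. Your variant of composing $\pi\circ\mu$ and working with its exceptional locus on $X'$ is just a cleaner bookkeeping of the same contradiction the paper derives.
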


\begin{corollary}Let $(X,B)$ be a klt pair, $ L \in \Pic X$ strictly nef such that $K_X+B+t_0L$ is abundant for some $t_0 > 2 \cdot \dim X$. Then $K_X+B+tL$ is ample  for all $ t > 2 \cdot  \dim X$.

\begin{proof} $K_X+B+t_0L$ is strictly nef by the cone theorem and abundant, hence big by above lemma. Then $2(K_X+B+t_0L)-(K_X+B)$ is nef and big and thus $K_X+B+t_0L$ is semiample by the basepoint free theorem. Now it is easy to see that a strictly nef and semiample divisor is ample, thus $K_X+B+t_0L$ is ample. The conclusion now follows from \cite[Lemma 1.3]{Ser} (or rather its log version which has the same proof).
\end{proof}
\end{corollary}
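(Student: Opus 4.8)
The plan is to show first that $K_X+B+t_0L$ is ample for the $t_0$ fixed in the hypothesis, and then to propagate ampleness to all $t>2\dim X$. For the ampleness of $D_0:=K_X+B+t_0L$ I would run the chain of implications \emph{strictly nef} $\Rightarrow$ \emph{big} $\Rightarrow$ \emph{semiample} $\Rightarrow$ \emph{ample}.

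First I would verify that $D_0$ is strictly nef. By the cone theorem for the klt pair $(X,B)$, every $K_X+B$-negative extremal ray of $\overline{NE}(X)$ is spanned by a curve $C_i$ with $0<-(K_X+B)\cdot C_i\le 2\dim X$, while the strictly nef \emph{Cartier} divisor $L$ satisfies $L\cdot C\in\mathbb{Z}_{>0}$, hence $L\cdot C\ge 1$, for every curve $C$. Given an irreducible curve $C$: if $(K_X+B)\cdot C\ge 0$ then $D_0\cdot C\ge t_0(L\cdot C)>0$; if $(K_X+B)\cdot C<0$ then the class of $C$ must have a strictly positive component along some $C_i$ (otherwise it would lie in $\overline{NE}(X)_{K_X+B\ge 0}$), and on $C_i$ we have $D_0\cdot C_i\ge t_0-2\dim X>0$ while all remaining contributions are $\ge 0$ — which also shows $D_0$ is nef. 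Hence $D_0$ is strictly nef. Since a strictly nef Cartier divisor is ASN (take the birational model to be $X$ itself) and $D_0$ is abundant by hypothesis, the preceding lemma — applied after clearing denominators — shows $D_0$ is big.

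Next, $2D_0-(K_X+B)=K_X+B+2t_0L=D_0+t_0L$ is big (the sum of a big divisor and a nef one) and nef (by the same cone-theoretic estimate), so the base-point-free theorem for klt pairs gives that $D_0$ is semiample. Then $mD_0$ is the pullback of an ample divisor along a morphism $\varphi$ for some $m>0$, and $\varphi$ contracts precisely the curves on which $D_0$ vanishes; as $D_0$ is strictly nef, $\varphi$ contracts no curve, so $\varphi$ is finite and $D_0$ is ample. Ampleness now propagates: for $t\ge t_0$, $K_X+B+tL=D_0+(t-t_0)L$ is a sum of an ample and a nef divisor, hence ample; for $2\dim X<t<t_0$, one invokes the log version of \cite[Lemma 1.3]{Ser}, whose proof goes through unchanged in the presence of the boundary $B$.

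The heart of the matter — and the step I expect to be the real obstacle — is the implication \emph{strictly nef} $\Rightarrow$ \emph{big}: strictly nef divisors need not be big in general (this is Serrano's conjecture), and what rescues the argument is precisely the abundance hypothesis, funneled through Kawamata's structure theorem for nef and abundant divisors (the preceding lemma). A smaller point that must be handled with care is extracting genuine \emph{strict} nefness of $D_0$ from the cone theorem rather than mere nefness; this is exactly where the strict inequality $t_0>2\dim X$, as opposed to $t_0\ge 2\dim X$, is used.
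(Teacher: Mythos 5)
Your proposal is correct and follows essentially the same route as the paper: strict nefness of $K_X+B+t_0L$ via the cone theorem, bigness via the preceding lemma on ASN abundant divisors, semiampleness via the basepoint-free theorem applied to $2(K_X+B+t_0L)-(K_X+B)$, ampleness from strictly nef plus semiample, and then the log version of Serrano's Lemma 1.3 for the remaining values of $t$. The extra details you supply (the integrality bound $L\cdot C_i\ge 1$, the bound $-(K_X+B)\cdot C_i\le 2\dim X$, and the finiteness of the semiample fibration) are exactly the steps the paper leaves implicit.
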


The following generalization of the basepoint-free theorem will be indispensible for us. It can be proved by a minor modification of the usual arguments (\cite[page 78]{KM}). See \cite [Theorem 2.1]{Am2} for the proof of a much more general statement.
\begin{theorem} \label{Amb}Let $(X,B)$ be a sub-klt pair, $H$ a nef $\mathbb{Q}$-Cartier divisor on $X$. Assume that 
\begin{enumerate}
\item $rH-(K_X+B)$ is nef and big for some $r \in \mathbb{N}$.
\item $H^0(\mathcal{O}_X(\left\lceil{\mathbf{A}(X,B)} \right\rceil +j \overline{\mathbf{H}})) \subset H^0(\mathcal{O}_X(jH))$ for all $ j \in \mathbb{N}$.
\end{enumerate}
Then $H$ is semiample.
\end{theorem}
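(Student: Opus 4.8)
The plan is to reproduce, with a single modification, the proof of the Kawamata--Shokurov basepoint--free theorem around \cite[page~78]{KM} --- built from Kawamata--Viehweg (KV) vanishing, the non--vanishing theorem, tie--breaking, a Noetherian argument, and induction on $\dim X$. The only place the classical proof uses effectivity of the boundary is the step where a global section of a sheaf built out of $\mathbf{A}(X,B)$ is reinterpreted as a section of $\mathcal{O}_X(mH)$: for a sub--klt pair all coefficients of $\mathbf{A}(X,B)$ exceed $-1$, so $\lceil -B\rceil=\lceil\mathbf{A}(X,B)_X\rceil\ge 0$, and what is required is exactly the inclusion $H^0(\mathcal{O}_X(\lceil -B\rceil+mH))\subseteq H^0(\mathcal{O}_X(mH))$ of hypothesis (2). (When $B\ge 0$ one has $\lceil -B\rceil\le 0$ and the inclusion is automatic.) I would also record the companion observation, proved by comparing orders of vanishing along the prime divisors of $X$: if $g\colon W\to X$ is a log resolution of $(X,B+\Gamma)$ with $\Gamma\ge 0$ an auxiliary effective $\mathbb{Q}$--divisor, then any section of $\mathcal{O}_W(\lceil\mathbf{A}(X,B+\Gamma)_W\rceil+mg^*H)$ is a section of $\mathcal{O}_X(\lceil -B\rceil+mH)$; so every section produced by KV on such a $W$ lands, after pushforward and (2), in $H^0(\mathcal{O}_X(mH))$.

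The argument then has two parts. First, non--vanishing: since $mH-(K_X+B)=(m-r)H+(rH-(K_X+B))$ is nef and big for $m\ge r$, KV gives $H^i(W,\mathcal{O}_W(\lceil\mathbf{A}(X,B)_W\rceil+mg^*H))=0$ for $i>0$, so $h^0$ equals $\chi$, a polynomial in $m$ on the progression of sufficiently divisible $m$; it is non--negative, and by the bigness of $rH-(K_X+B)$ exactly as in the classical non--vanishing theorem it is not identically zero, hence eventually positive, so $|mH|\neq\emptyset$ for $m\gg 0$ by the companion observation and (2). Second, basepoint--freeness: the closed sets $\operatorname{Bs}|mH|$ over sufficiently divisible $m$ decrease and stabilise to some $W_0$; if $W_0\neq\emptyset$ choose a general point $x$ of a top--dimensional component of $W_0$, and use general members of $|mH|$ (which all contain $W_0$) together with the bigness of $rH-(K_X+B)$ to tie--break to a sub--klt structure on a log resolution whose unique non--klt place is a prime divisor $Z$ mapping to $\overline{\{x\}}$ with $\dim Z<\dim X$; apply KV vanishing to lift sections off $Z$, so that a suitable sheaf on the resolution surjects onto $H^0$ of an adjunction sheaf on $Z$, which by the inductive hypothesis applied to the data induced on $Z$ is nonzero; any nonzero lift pushes down, through the companion observation and (2), to a section of $\mathcal{O}_X(m'H)$ not vanishing at $x$, contradicting $x\in\operatorname{Bs}|m'H|=W_0$. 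Hence $W_0=\emptyset$ and $H$ is semiample.

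The main obstacle --- and the whole content of the phrase ``minor modification'' --- is to propagate hypothesis (2) through the induction on dimension: one must show that the sub--klt pair $(Z,B_Z)$ produced by adjunction at a non--klt centre $Z$ (both in the non--vanishing theorem and in the basepoint--free theorem) inherits the analogue of (2), i.e.\ $H^0(\mathcal{O}_Z(\lceil\mathbf{A}(Z,B_Z)_Z\rceil+jH|_Z))\subseteq H^0(\mathcal{O}_Z(jH|_Z))$ for all $j$, starting only from (2) on $X$. The natural tool is the same KV lifting that drives the argument: it yields a surjection $H^0(\mathcal{O}_X(\lceil -B\rceil+NH))\twoheadrightarrow H^0(\mathcal{O}_Z(\lceil\mathbf{A}(Z,B_Z)_Z\rceil+NH|_Z))$ for suitable $N$, whose source is $H^0(\mathcal{O}_X(NH))$ by (2) (and $\lceil -B\rceil\ge 0$), so every section on $Z$ of the bundle on the right restricts from a section of $\mathcal{O}_X(NH)$ and hence lies in $H^0(\mathcal{O}_Z(NH|_Z))$; the remaining work is to clear the twist down to an arbitrary $j$ and to keep the round--ups aligned through adjunction, tracking that $B_Z$ contributes no more than what $\lceil -B\rceil$ already controls. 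A clean way to organise this is to prove by induction a statement that carries the clause (2) and to isolate its descent to $Z$ as a lemma. Everything else --- KV vanishing for sub--klt pairs, the stabilisation of base loci, and tie--breaking --- is used verbatim from \cite{KM}.
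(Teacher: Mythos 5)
The paper itself gives no proof of this statement: it is quoted from Fujino \cite[Theorem 2.1]{Fu} with the remark that it follows from the usual basepoint-free arguments of \cite[page 78]{KM} by a minor modification, and that modification is exactly the one you identify at the outset --- hypothesis (2) replaces, at the moment of pushing sections down to $X$, the classical fact that for an \emph{effective} klt boundary the divisor $\lceil\mathbf{A}(X,B)_W\rceil$ is exceptional. Up to that point your plan (KV vanishing, Shokurov non-vanishing, tie-breaking, Noetherian induction, plus the ``companion observation'' that adding an effective $\Gamma$ only decreases $\mathbf{A}$, so all sections produced by the X-method land in the left-hand side of (2)) is the intended argument.

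The gap is in what you yourself call ``the whole content'': the claim that one must descend hypothesis (2) to the adjunction pair $(Z,B_Z)$ on the non-klt centre and run the induction with clause (2) attached. You do not prove this descent, and the mechanism you sketch is not convincing --- a KV surjection onto $H^0(\mathcal{O}_Z(\lceil\mathbf{A}(Z,B_Z)_Z\rceil+NH|_Z))$ for one particular large twist $N$ does not yield the inclusion for every $j$, and sections of $\mathcal{O}_X(NH)$ restricting to $Z$ only give a subspace of $H^0(\mathcal{O}_Z(NH|_Z))$ in a way that does not obviously control the whole source of your surjection. Fortunately the descent is unnecessary, and recognizing this is what makes the modification ``minor'': in \cite{KM} the induction on dimension is carried entirely by the Non-vanishing theorem, whose hypotheses already allow an arbitrary sub-boundary ($A$ with SNC support, $\lceil A\rceil\geq 0$, i.e.\ coefficients $>-1$) and contain no analogue of (2); what is needed on the centre $Z$ is non-vanishing of $H^0(Z,\lceil A_Z\rceil+m H|_Z)$, never of $H^0(Z,mH|_Z)$, so no inclusion hypothesis has to be propagated. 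Hypothesis (2) is invoked exactly twice, both times on $X$: once to turn the output of non-vanishing into $|mH|\neq\emptyset$, and once at the end of the basepoint-freeness step to turn the lifted section into an element of $|m'H|$ not vanishing at the chosen point. Relatedly, your description of the non-vanishing step is off: one cannot argue that $\chi$ is a non-identically-zero polynomial from bigness of $rH-(K_X+B)$ (for $H$ nef and not big the Euler characteristic of such twists can vanish identically); one simply applies Shokurov's theorem as stated, which goes through verbatim in the sub-klt setting. With these corrections your outline closes up and coincides with the proof the paper is pointing to.
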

 
\begin{lemma} \label{Fuj}
Let $(X,B+\mathbf{M})$ be a sub-generalized klt pair, $H$ nef $\mathbb{Q}$-Cartier divisor on $X$ such that $H-(K_X+B+\mathbf{M}_X)$ is nef and abundant. Suppose also that $\kappa(aH-(K_X+B+\mathbf{M}_X)) \geq 0$ and 
$ \nu(aH-(K_X+B+\mathbf{M}_X)) =\nu(H-(K_X+B+\mathbf{M}_X))$
for some $a \in \mathbb{Q}_{>1}$. Let 
\begin{center} $(X, B+\mathbf{M}_X) \xleftarrow{\mu}(Y, B_Y+\mathbf{M}_Y) \xrightarrow{f} Z$ \end{center}
be the standard diagram induced by the Iitaka fibration of $H-(K_X+B+\mathbf{M}_X)$ (see \cite[Proposition 2.1]{Ka}). Then we have the following:

\begin{enumerate}
\item Suppose that the moduli b-divisor $\mathbf{M}_Z$ induced on $Z$ by the generalized sub-pair $(Y,B_Y+\mathbf{M}_Y)$ is b-nef. Then $H$ is semiample.
\item Suppose $(X,B+\mathbf{M})$ is generalized klt. Then rank $f_*\mathcal{O}_Y(\left\lceil{-B_Y}\rceil\right)=1$.
\end{enumerate}

\begin{proof}
We argue along the lines of \cite{Fu}.\vspace{0.3 cm}

Recall first that \cite[Proposition 2.1,2.3]{Ka} says that there exist smooth projective varieties $Y$ and $Z$ and morphisms 
\begin{center}
$(X,B+\mathbf{M}_X) \xleftarrow{\mu} (Y,B_Y+\mathbf{M}_Y) \xrightarrow{f} Z$
\end{center}
where $\mu$ is birational, $f$ is a surjective morphism with connected fibers birational to the Iitaka fibration of $H-(K_X+B+\mathbf{M}_X)$, $\mu^*(K_X+B+\mathbf{M}_X)=K_Y+B_Y+\mathbf{M}_Y$, where $B_Y$ is defined by $\mu_*(\mathbf{M}_Y)=\mathbf{M}_X$. We may assume that $\mathbf{M}_Y$ descends to a nef Cartier divisor on $Y$ (possibly after replacing $Y$ by a higher model) and there exists $D_Z \in \Pic (Z)$ nef and big such that, setting $H_Y=\mu^*(H)$, we have
\begin{center}
 $\mu^*(H-(K_X+B+\mathbf{M}_X))=H_Y-(K_Y+B_Y+\mathbf{M}_Y) \sim_{\mathbb{Q}} f^*(D_Z)$.
\end{center}
Also, there exists $H_Z \in \Pic (Z)$ nef such that $H_Y \sim_{\mathbb{Q}} f^*(H_Z)$. Note that $(Y,B_Y)$ is sub-klt and 
\begin{center}
$\mathbf{A}_Y(Y,B_Y+\mathbf{M}_Y)=-B_Y$
\end{center}

As noted above, there exists a nef and big divisor $D_Z$ and a nef divisor $H_Z$ on $Z$ such that $K_Y+B_Y+\mathbf{M}_Y= f^*(H_Z-D_Z)$. Now by assumption, we can write $H_Z-D_Z \sim_{\mathbb{Q}} K_Z+B_Z+\mathbf{M}_Z$ such that \\

\begin{enumerate}
\item $\mathbf{M}_Z$ is $\mathbb{Q}$-Cartier and b-nef.
\item $(Z,B_Z)$ is sub-klt. $B_Z$ is defined as follows. Let $D \subset Z$ be any prime divisor. Let 
\begin{center}
$a_D:=sup\{t|(Y, B_Y+tf^*(D))$ is sub-lc over the generic point of $D\}$
\end{center}
and define $B_Z= \Sigma (1-a_D)D$. We note that replacing $Y$ and $Z$ by higher models, we may assume that $(Y,B_Y)\xrightarrow{f} (Z,B_Z)$ satisfies standard SNC assumptions (see \cite[Definition 4.4]{Fi}). Then the definition of $a_D$ requires that all coefficients of $B_Y+tf^*(D)$ are $\leq 1$. Now, $(Y,B_Y)$ being sub-klt implies that all coefficients of $B_Y$ are $<1$ and thus that $a_D >0$ for all $D$. $(Z,B_Z)$ is thus sub-klt.
\end{enumerate}
Now let $(Z^{'}, B_{Z^{'}}) \rightarrow (Z,B_Z)$ be a higher model such that $\mathbf{M}_{Z^{'}}$ is nef. Then $(Z^{'}, B_{Z^{'}})$ is of course also sub-klt. Let $Y^{'}$ be the normalization of the main component of $Y \times _{Z}Z^{'} \rightarrow Z^{'}$. Replacing $Y$ and $Z$ by $Y^{'}$ and $Z^{'}$, we may assume that $M_Z$ is nef on $Z$. We need:
\begin{lemma} (See \cite[Lemma 9.2.2]{Co}) Let $f:(Y,B_Y) \rightarrow Z$ be a fibration such that $(Y,B_Y)$ is sub-klt over the generic point of $Z$. Let $\mathbf{B} $ be the induced discriminant $\mathbb{Q}$-b-divisor of $Z$. Then $\mathcal{O}_Z(\left\lceil{-\mathbf{B}}\right\rceil)\subset f_* \mathcal{O}_Y(\left\lceil{\mathbf{A}(Y,B_Y)} \right\rceil)$.
\end{lemma}
As in the proof of \cite[Proposition 9.2.3]{Co}, this shows that $\mathcal{O}_Z(\left\lceil\mathbf{{-B+D}} \right\rceil)$\\
$ \subset f_*\mathcal{O}_Y(\left\lceil{\mathbf{A}(Y,B_Y)+f^*\mathbf{D}}\right\rceil)$ holds for every $\mathbb{Q}$-b-Cartier $\mathbb{Q}$-b-divisor $\mathbf{D}$ on $Y$: we can replace $Z$ by a higher model $Z^{'}$ on which $\mathbf{D}$ is $\mathbb{Q}$-Cartier and replace $Y$ by the normalization of the main component of $Y \times_Z Z^{'}$ as before. Then $(Y, B_Y-f^*D)$ is sub-klt over the generic point of $Z$ with discriminant $\mathbb{Q}$-b-divisor $\mathbf{B-D}$ and \begin{center}
$\mathbf{A}(Y,B_Y-f^*D)=\mathbf{A}(Y,B_Y)+f^*(D)$. 
\end{center}
Now the above lemma applied to $f:(Y,B_Y-f^*D) \rightarrow Z$ gives 
\begin{center}
$\mathcal{O}_Z(\left\lceil{\mathbf{-B+D}}\right\rceil) \subset f_*\mathcal{O}_Y(\left\lceil{\mathbf{A}(Y,B_Y)+f^*\mathbf{D}}\right\rceil)$
\end{center}
as claimed. Thus 
\begin{center}
$\mathcal{O}_Z(\left\lceil{\mathbf{A}(Z,B_Z)}\right\rceil +j \overline{\mathbf{H}_Z}) \subset f_*\mathcal{O}_Y(\left\lceil {\mathbf{A}(Y,B_Y)}\right\rceil + j \overline{\mathbf{H}_Y})$
\end{center}
for all $ j \in \mathbb{Z}$. Taking $H^0$ gives 
\begin{center}
$H^0(Z, \mathcal{O}_Z(\left\lceil{\mathbf{A}(Z,B_Z)}\right\rceil +j \overline{\mathbf{H}_Z}) )\subset H^0(Y, \mathcal{O}_Y(\left\lceil {\mathbf{A}(Y,B_Y)}\right\rceil + j \overline{\mathbf{H}_Y})) $
\end{center}
\begin{center}
$=H^0(X, \mu_*(\mathcal{O}_Y(\left\lceil {\mathbf{A}(Y,B_Y)}\right\rceil + j \overline{\mathbf{H}_Y})) = H^0(X, \left \lceil{A(X,B)}\right\rceil +jH)$
\end{center}
\begin{center}
$ =H^0(X, \mathcal{O}_X(jH)) =H^0(Y, \mathcal{O}_Y(jH_Y))$
\end{center}
\begin{center}
$=H^0(Y,\mathcal{O}_Y(jf^*(H_Z)))=H^0(Z, \mathcal{O}_Z(jH_Z))$.
\end{center}
Thus $H_Z$ is semiample by Theorem \ref{Amb}. This proves ($1$). \\

Now we prove ($2$). Clearly rank $f_* \mathcal{O}_Y(\left \lceil {-B_Y}\right \rceil) \geq 1$ since $(Y, B_Y)$ is sub-klt. \\

Take $ A \in \Pic Z$ sufficiently ample such that $f_* \mathcal{O}_Y(\left \lceil {-B_Y} \right \rceil ) \otimes A$ is globally generated. Since $D_Z$ is big, 

\begin{center}
$f_* \mathcal{O}_Y(\left \lceil {-B_Y} \right \rceil ) \otimes A \subset f_* \mathcal{O}_Y(\left \lceil {-B_Y} \right \rceil ) \otimes \mathcal{O}_Z(mD_Z)$
\end{center}

if $m \gg 0$. Now note that since $B_Y= \mu_*^{-1}(B)+F$ where $F$ is $\mu$-exceptional, $B \geq 0$ and 
\begin{center}
$ 0 \leq \left \lceil {-B_Y} \right \rceil = \left \lceil {\mu_*^{-1}(-B)} \right \rceil + \left \lceil{-F} \right \rceil $,
\end{center}
 we can conclude that $\left\lceil{\mu_*^{-1}(-B)}\right \rceil =0$ and hence $ \left \lceil {-B_Y} \right \rceil $ is an effective $\mu $-exceptional divisor.
 Now 
\begin{center}
$ H^0(Z, f_* \mathcal{O}_Y(\left \lceil {-B_Y} \right \rceil ) \otimes A) \subset H^0(Z, f_* \mathcal{O}_Y(\left \lceil {-B_Y} \right \rceil ) \otimes \mathcal{O}_Z(mD_Z)) = H^0(Y, \mathcal{O}_Y(\left \lceil {-B_Y} \right \rceil \otimes  f^*(mD_Z))$.
\end{center}

We have 
\begin{center}
$ H^0(Y, \mathcal{O}_Y(\left \lceil {-B_Y} \right \rceil \otimes \mathcal{O}_Y( f^*(mD_Z))) = H^0(\mathcal{O}_Y(E) \otimes f^*(mD_Z)) $
\end{center} where $E \geq 0$ is $ \mu $-exceptional. 
 This equals 
\begin{center}
$H^0(Y, \mathcal{O}_Y(E) \otimes \mu^*(m(H-(K_X+B+\mathbf{M}_X)))) = H^0(X, m(H-(K_X+B+\mathbf{M}_X)))= H^0(Z, mD_Z)$.

\end{center}
This shows that rank $f_* \mathcal{O}_Y(\left \lceil{-B_Y} \right \rceil) =1$.

\end{proof}
\end{lemma}

\begin{corollary}Let $(X,B+\mathbf{M})$ be a generalized klt pair, $H$ a nef $\mathbb{Q}$-Cartier divisor on $X$ such that $H-(K_X+B+\mathbf{M}_X)$ is nef and abundant. Assume further that $\mathbf{M}$ is b-semiample, $\kappa(aH-(K_X+B+\mathbf{M}_X)) \geq 0$ and $ \nu(aH-(K_X+B+\mathbf{M}_X)) =\nu(H-(K_X+B+\mathbf{M}_X))$ for some $a \in \mathbb{Q}_{>1}$. Then $H$ is semiample.

\begin{proof}

As in the proof of the previous Lemma, there exist morphisms

\begin{center} 
$(X,B+\mathbf{M}_X) \xleftarrow{\mu} (Y, B_Y+ \mathbf{M}_Y) \xrightarrow{f} Z$
\end{center}
where $\mathbf{M}_Y$ is semiample, $Y$ and $Z$ are both smooth, there exists $D_Z \in \Pic Z$ and nef and big, $H_Z \in \Pic Z$  such that $H_Y \sim_{\mathbb{Q}} f^*(H_Z)$,
\begin{center}
$\mu^*(H-(K_X+B+\mathbf{M}_X))=H_Y-(K_Y+B_Y+\mathbf{M}_Y) \sim_{\mathbb{Q}} f^*(D_Z)$
\end{center}
and
\begin{center}
 $\mu^*(K_X+B_X+\mathbf{M}_X)= K_Y+B_Y+\mathbf{M}_Y \sim _{\mathbb{Q}} f^*(H_Z-D_Z)$
\end{center} 
such that Supp($B_Y$) is SNC. Note: $\left\lceil{-B_Y} \right \rceil \geq 0$. Since $\mathbf{M}_Y$ is semiample, for $m$ sufficiently large and divisible, there exists $H=D_1+ \cdots +D_k \in |m\mathbf{M}_Y|$ such that $D_i$ is smooth for all $i$, $D_i \cap D_j = \varnothing $ for all $i \ne j$ and Supp $H$ $\cup $ Supp $B_Y$ is SNC. By taking $m \gg 0$, $(1/m) H \in |\mathbf{M}_Y| _{\mathbb{Q}}$ can be made arbitarily small, thus $(Y,B_Y+(1/m)H)$ is sub-klt. Now consider the induced fibration $(Y, B_Y+(1/m)H) \xrightarrow{f} Z$. Then 
\begin{center}
$f_* \mathcal{O}_Y(\left \lceil {-(B_Y+(1/m) H)} \right \rceil) = f_* \mathcal{O}_Y(\left \lceil {-B_Y}\right \rceil)$ 
\end{center}
 since $H$ can be chosen to have no components in common with $B_Y$. By Lemma \ref{Fuj}($2$), we have rank $f_*\mathcal{O}_Y(\left\lceil{-B_Y} \right \rceil ) =1$. Thus $(Y, B_Y+1/mH) \xrightarrow{f} Z$ is a klt-trivial fibration. If $B_Z$ and $\mathbf{M}_Z$ denote the induced discriminant and the moduli divisors on $Z$, then by \cite[Theorem 2.7]{Am}, $H_Z-D_Z= K_Z+B_Z+\mathbf{M}_Z$ where $\mathbf{M}_Z$ is b-nef. Thus we are done by the $\mathbf{M}=0$ case of the Lemma \ref{Fuj}($1$) (which is essentially worked out in \cite{Fu}).

\end{proof}

\end{corollary}

\begin{remark} The above corollary can also be proved by combining \cite [Theorem 4.12]{Fi} with Lemma \ref{Fuj}($1$).
\end{remark}

The following is a generalized canonical bundle formula which is used for proving theorem 4. It may also be of independent interest.

\begin{theorem} Let $(Y, B)$ be a sub klt pair, $L \in \Pic X$ nef and $f:Y \rightarrow Z$ be a contraction such that $K_Y+B+L \sim_{\mathbb{Q},f} 0$ and rank $f_*\mathcal{O}_Y(\left\lceil{\mathbf{A}(Y, B)}\right\rceil) =1$. Suppose $K_Y+B \geq_{\mathbb{Q}}0$. Then the moduli b-divisor $\mathbf{M}_Z$ induced on $Z$ is b-nef.
\end{theorem}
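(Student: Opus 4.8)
\emph{Sketch of proof.} The strategy is to produce $\mathbf{M}_Z$ by the canonical bundle formula, reduce its b-nefness to the case $\dim Z=1$, and in that case split off $L$ and invoke Ambro's theorem for ordinary klt-trivial fibrations. The hypothesis $K_Y+B\ge_{\mathbb Q}0$ enters only through the following observation: on a general fibre $F$ of $f$ one has $L|_F\sim_{\mathbb Q}0$. Indeed, write $K_Y+B\sim_{\mathbb Q}E$ with $E\ge 0$; restricting to $F$ gives $E|_F+L|_F=(K_Y+B+L)|_F\sim_{\mathbb Q}0$, and since $E|_F$ is effective and $L|_F$ nef, intersecting with a power of an ample class on $F$ forces $E|_F\equiv 0\equiv L|_F$; an effective numerically trivial divisor is zero, so $E|_F=0$ and $L|_F\sim_{\mathbb Q}-(K_F+B_F)\sim_{\mathbb Q}-E|_F=0$. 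In particular $E$ is $f$-vertical.

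Next I would set up $K_Y+B+L\sim_{\mathbb Q}f^*D_Z$ with $D_Z=K_Z+B_Z+\mathbf M_Z$, where $B_Z$ is the discriminant of $(Y,B)$ and $\mathbf M_Z$ the moduli $\mathbb{Q}$-b-divisor (the rank-one hypothesis being exactly what makes these well defined), and reduce to $\dim Z=1$. Cutting $\bar Z$ by general very ample divisors and, where needed, passing to a generically finite cover via Lemma 5, all the hypotheses (in particular $L|_F\sim_{\mathbb Q}0$) are preserved, and — exactly as in the proof of \cite[Theorem 2.7]{Am} — the moduli b-divisor restricts compatibly; so it suffices to prove $\deg\mathbf M_Z\ge 0$ when $Z$ is a smooth projective curve with $L|_F\sim_{\mathbb Q}0$.

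So let $Z$ be a smooth curve. Pick $m$ with $mL|_F\sim 0$; then $f_*\mathcal O_Y(mL)$ is a line bundle $\mathcal O_Z(L_Z)$, and the nonzero evaluation map gives $mL\sim f^*L_Z+\Gamma$ with $\Gamma\ge 0$ and $f$-vertical. After moving any whole fibre of $f$ contained in $\Gamma$ into the first summand, set $L_0=\tfrac1m L_Z$, $\Gamma_0=\tfrac1m\Gamma$, so $L\sim_{\mathbb Q}f^*L_0+\Gamma_0$. Then $f:(Y,B+\Gamma_0)\to Z$ is an ordinary klt-trivial fibration: it is sub-klt over $\eta_Z$ and satisfies the rank-one condition because $\Gamma_0$ is vertical, and $K_Y+(B+\Gamma_0)\sim_{\mathbb Q}f^*(D_Z-L_0)$. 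Writing $D_Z-L_0=K_Z+B_Z^{\sharp}+\mathbf M_Z^{\sharp}$ for its discriminant and moduli divisors and subtracting from $D_Z=K_Z+B_Z+\mathbf M_Z$ gives $\mathbf M_Z=\mathbf M_Z^{\sharp}+L_0+(B_Z^{\sharp}-B_Z)$. Now $\deg\mathbf M_Z^{\sharp}\ge 0$ by \cite[Theorem 2.7]{Am} (b-nefness over a curve being just non-negativity of degree); $B_Z^{\sharp}\ge B_Z$, since adjoining the effective $\Gamma_0$ to $B$ only lowers the log canonical thresholds defining the discriminant, so $\deg(B_Z^{\sharp}-B_Z)\ge 0$; and $\deg L_0\ge 0$, because intersecting $f^*L_0\sim_{\mathbb Q}L-\Gamma_0$ with a multisection $C$ of $f$ meeting the finitely many bad fibres away from $\operatorname{Supp}\Gamma_0$ gives $\deg(C/Z)\cdot\deg L_0=L\cdot C-\Gamma_0\cdot C=L\cdot C\ge 0$. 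Hence $\deg\mathbf M_Z\ge 0$, as wanted.

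The point I expect to require the most care is the reduction to $\dim Z=1$ — verifying that after cutting down and base-changing to the normalization of an \emph{arbitrary} curve of the base the moduli b-divisor is still computed correctly, which is where one leans on the base-change formalism for discriminant and moduli parts and may need to pass to a high model of the base — together with the last step of the curve case: arranging (by moving pullback parts into $L_0$) that a multisection of $f$ disjoint from $\operatorname{Supp}\Gamma_0$ exists, and checking that $\Gamma_0$ is invisible to the discriminant and to the rank-one condition. Everything downstream is formal: once $L$ is written as a pullback plus a vertical divisor the problem is reduced to Ambro's canonical bundle formula for ordinary klt-trivial fibrations.
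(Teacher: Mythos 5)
Your first step (using $K_Y+B\ge_{\mathbb Q}0$ to get $(K_Y+B)|_F\sim_{\mathbb Q}0$ and $L|_F\sim_{\mathbb Q}0$ on a general fibre) is exactly how the paper's proof begins, but the rest of your argument has a genuine gap at the reduction to $\dim Z=1$. Nefness of the trace of $\mathbf{M}$ on a model of $Z$ must be tested against \emph{every} curve of that model, not just general complete intersection curves obtained by cutting with very ample divisors (a non-nef divisor can easily be non-negative on all such general curves). To test a special curve $C$ you must base change the fibration along the non-dominant morphism $C^{\nu}\to Z$, and Lemma 5 (generically finite, hence dominant, base change) does not apply; the compatibility of the discriminant and moduli parts with such restrictions is essentially equivalent to the descent of $\mathbf{M}$ to a model where it is nef, i.e.\ to the statement being proved. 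In Ambro's work this is exactly why one first performs semistable reduction in codimension one and proves nefness of $\pi_*\omega_{V/Z}$ via the covering trick and Hodge-theoretic semipositivity; your sketch assumes this base-change compatibility rather than proving it. Note also that your final decomposition $\mathbf{M}_Z=\mathbf{M}_Z^{\sharp}+L_0+(B_Z^{\sharp}-B_Z)$ with $B_Z^{\sharp}-B_Z\ge 0$ only controls degrees on a curve base: on a higher-dimensional base an effective divisor is not nef, so the decomposition cannot be globalized and the whole weight of the proof falls on the (gappy) curve reduction. A secondary issue: the splitting $mL\sim f^*L_Z+\Gamma$ with $\Gamma$ vertical is only available over a curve; making $L$ (a multiple of) an honest pullback in general is a substantial step, which the paper carries out via \cite[Lemma 3.1]{LP}, a flattening base change plus semicontinuity to get triviality of $dL_Y$ on \emph{all} fibres, descent of $dL_Y$ to a nef line bundle on a finite cover of $Z$, and a Bloch--Gieseker root. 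Also, the multisection disjoint from $\operatorname{Supp}\Gamma_0$ that you invoke for $\deg L_0\ge 0$ need not exist (an ample complete intersection curve meets every divisor); over a curve base this can be repaired by a Zariski-lemma/negative-semidefiniteness argument on vertical divisors, but as written the step is not justified.

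For comparison, the paper's route after the common first step is: descend $dL_Y$ to a nef class $P_{Z'}$ on a finite cover, extract a $d$-th root by Bloch--Gieseker so that $L$ becomes a pullback $f''^*(L_{Z''})$, and then run Ambro's cyclic-covering argument: the eigensheaf $\mathcal{L}=f_*\mathcal{O}_Y(\lceil -B_Y-L_Y+f^*(B_Z+\mathbf{M}_Z)\rceil)$ of $\pi_*(\omega_{V/Z})$ is nef by the semistable semipositivity theorem, and is identified with $\mathcal{O}_Z(\mathbf{M}_Z-L_Z)\cdot\psi$, so $\mathbf{M}_Z=(\text{nef})+L_Z$ is nef on the relevant model. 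That machinery is precisely what supplies the base-change/descent statements your reduction to curves takes for granted, so your proposal as it stands does not constitute a proof.
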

\begin{proof} We write
\begin{equation} \label{1}
K_Y+B_Y+L_Y \sim_{\mathbb{Q}} f^*(D_Z) = f^*(K_Z+B_Z+\mathbf{M}_Z).
\end{equation} as before. Let $F \subset Y$ be a general fiber of $f$. Then $(K_Y+B_Y)|_{F} \geq 0$. Suppose $(K_Y+B_Y)|_{F} >_{\mathbb{Q}}0$. Then there exists a curve $C \subset F$ such that $((K_Y+B_Y)|_F \cdot C) >0$ and thus $((K_Y+B_Y+L_Y)|_F \cdot C) >0$. This contradicts (\ref{1}). Thus $(K_Y+B_Y)|_F \sim_{\mathbb{Q}} 0$, $L|_F \sim_{\mathbb{Q}} 0$, where $F$ is a general fiber of $f$. Let $ d \in \mathbb{N}$ such that $\mathcal{O}_F(dL_Y|_F) \cong \mathcal{O}_F$. Now by \cite[Lemma 3.1]{LP}, after replacing $Y$ and $Z$ with higher models, we can assume that $L_Y$ is numerically trivial on every fiber of $f$. By performing a flattening base change, we can replace $Y$ and $Z$ again by higher models and assume that $f$ is flat. Note that $\mathcal{O}_F(dL_Y|_F) \cong \mathcal{O}_F$ is still true for a general fiber $F$. Then by the semicontinuity theorem, 
\begin{center}
$ z \rightarrow h^0(\mathcal{O}(d L_Y)|_{Y_{z}})$
\end{center}
 is an upper semicontinuous function on $Z$. Thus 
\begin{center}
$\{z \in Z| h^0(\mathcal{O}(dL_Y|_{Y_z})) \geq 1 \}$
\end{center}
is closed in $Z$. Since $\mathcal{O}_Y(d L_Y)$ is trivial on a general fiber and numerically trivial on all fibers, this shows that $\mathcal{O}_Y(dL_Y|_{Y_{z}}) \cong \mathcal{O}_{Y_{z}}$ for all $z \in Z$. Now there exists a finite morphism $ \theta: Z^{'} \rightarrow Z$, from a smooth projective variety $Z^{'}$ such that letting $Y^{'}$ denote a desingularization of the main component of $Y \times _{Z} Z^{'}$, the induced morphism $f^{'}: Y^{'} \rightarrow Z^{'}$ is semistable in codimension 1. Since $\mathcal{O}_{Y^{'}}(dL_{Y^{'}})$ is trivial on all fibers of $f^{'}$ and $f^{'}$ is flat with reduced connected fibers over a large open set, there exists $P_{Z^{'}} \in \Pic (Z^{'})$ nef such that $f^{'*}(P_{Z^{'}}) = \mathcal{O}_{Y^{'}}(dL_{Y^{'}})$ (same arguments as \cite[Chapter 3, Exercise 12.4]{Hart}). Let $ \alpha: Z^{''} \rightarrow Z^{'}$ be a Bloch-Gieseker cover with $Z^{''}$ smooth such that $\alpha^*(P_{Z^{'}})= L_{Z^{''}}^{\otimes d}$ for some $L_{Z^{''}} \in \Pic (Z^{''})$ (see \cite [Theorem 4.1.10]{Laz}). Then $L_{Y^{''}}= f^{''*}(L_{Z^{''}})$ where $f^{''}: Y^{''} \rightarrow Z^{''}$ is obtained by base changing $f$. \\

We now argue along the lines of \cite[Lemma 5.2(5)]{Am} to show that $\mathbf{M}_Z$ is b-nef. Let $b \in \mathbb{N}$ smallest such that $b(K_{Y^{''}}+B_{Y^{''}}+L_{Y^{''}}) \sim_{f^{''}} 0$. There exists $ \phi \in K(Y^{''})^*$ such that $K_{Y^{''}}+B_{Y^{''}}+L_{Y^{''}}+ 1/b (\phi) = f^{''*}(K_{Z^{''}}+B_{Z^{''}}+\mathbf{M}_{Z^{''}})$. Let $V^{''}$ be a desingularization of the normalization of $Y^{''}$ in $K(Y^{''})(\phi^{1/b})$. Note that $V^{''}$ is irreducible by the minimality of $b$. Let 

\begin{center} 
$\pi^{''}: (V^{''},B_{V^{''}}) \xrightarrow{\nu^{''}} (Y^{''}, B_{Y^{''}}) \xrightarrow{f^{''}} Z^{''}$ 
\end{center}
be the induced set-up.\\

Let $\Sigma_{f ^{''}}\subset  Z^{''}$ be the maximal closed subvariety such that $f^{''}$ is smooth over $Z^{''} \setminus  \Sigma_{f^{''}}$. Replacing $Z^{''}$ by a log-resolution, we can assume that $ \Sigma _{f^{''}}$ has SNC support. Also by \cite[Proposition 5.4]{Am}, there exists $\tau : \overline{Z^{'}} \rightarrow Z^{''}$ finite such that $\overline{Z^{'}}$ admits an SNC divisor supporting $\tau^{-1}(\Sigma_{f^{''}})$ and the locus where $\tau$ is not etale such that the induced morphism $(\overline{V^{'}}, B_{\overline{V^{'}}}) \xrightarrow {\overline{\pi^{'}}} \overline {Z^{'}}$ is semistable in codimension 1. Then $\overline{\pi^{'}}_*(\omega_{\overline {V^{'}}/\overline{Z^{'}}})$ is nef (\cite[Theorem 4.4]{Am}). If $\mathbf{M}_{\overline{Z^{'}}}$  is the induced moduli b-divisor on $\overline{Z^{'}}$, then $\tau^* \mathbf{M}_{Z^{''}}= \mathbf{M}_{\overline{Z^{'}}}$ (\cite [Proposition 5.5] {Am}) and $( \theta \circ \alpha)^*(\mathbf{M}_{Z^{''}})= \mathbf{M}_{Z}$ (\cite[Lemma 5.1]{Am}). So it suffices to show that $\mathbf{M}_{\overline{Z^{'}}}$ is nef. The category of generalized klt-trivial fibrations is  closed under generically finite base changes (Lemma 5). Thus $\overline{f^{'}}: (\overline{Y^{'}}, B_{\overline {Y^{'}}}) \rightarrow  \overline{Z^{'}}$ is again generalized klt-trivial.\\

 To simplify notation, we thus replace $\overline{V^{'}}, \overline{Y^{'}}, \overline{Z^{'}}, \overline {\nu^{'}}, \overline{\pi^{'}}, \overline{f^{'}}$ by $V, Y,Z, \nu, \pi, f$ respectively. \\

Then $ \pi_*(\omega_{V/Z})$
\begin{center}
$= f_*(\bigoplus _{i=0}^{b-1} \mathcal{O}_Y(K_Y+\left\lceil{i/b(\phi)}\right\rceil) \otimes
 f^*\omega _Z ^{-1}) \otimes f_*(\bigoplus _{i=0}^{b-1} \mathcal{O}_Y(\left\lfloor {-i/b(\phi)} \right \rfloor ))$
\end{center} (\cite[Lemma 2.3]{Vi}).
 We have
\begin{center}
 $f_*(\bigoplus _{i=0}^{b-1} \mathcal{O}_Y(\left\lfloor {-i/b(\phi)} \right \rfloor ))= \mathcal{O}_Z$ 
\end{center}
and 
\begin{center}
$i/b(\phi) = i(-K_{Y/Z}-B_Y-L_Y+f^*(B_Z+\mathbf{M}_Z))$.
\end{center}
from which it follows that 

$\pi_*(\omega_{V/Z})$
\begin{center}
$= \bigoplus _{i=0}^{b-1} f_*\mathcal{O}_Y(\left\lceil{(1-i)K_{Y/Z}-iB_Y-iL_Y+if^*(B_Z+\mathbf{M}_Z)}\right\rceil )$
 \end{center}
where the $i$-th summand on the right hand side is the eigensheaf corresponding to the eigenvalue $\zeta ^i$, where $\zeta$ is a primitive $b$-th root of unity, the action being that of $\mathbb{Z}/(b)$ on $K(V)$ and $\psi$ is a rational function on $V$ such that $\psi^b= \phi$ (see \cite [page 23-24]{EV} for more details). By \cite[Theorem 4.4]{Am}, $\pi_*(\omega_{V/Z})$ is a nef vector bundle and hence so are the direct summands. Then
\begin{equation} \label{2}
K_V+B_V+(\psi)= \pi^*(K_Z+B_Z+\mathbf{M}_Z-L_Z).
\end{equation}
Let 
\begin{center}
$\mathcal{L} = f_*\mathcal{O}_Y( \left\lceil {-B_Y-L_Y+f^*(B_Z+\mathbf{M}_Z)}\right\rceil) $
\end{center}

 be the $\zeta$-eigensheaf. Then $\mathcal{L} \in \Pic Z$ (it has rank $1$ since  rank $f_*\mathcal{O}_Y(\left\lceil{-B_Y} \right\rceil) =1$ and $L_Y$ is trivial on all fibers).
\begin{claim} There exists $ U \subset Z $ large open such that $(-B_V+ \pi^* B_Z)|_{\pi^{-1}(U)} \geq 0$.

\begin{proof} By (\ref{2}), we see that $(K_{V/Z})^h+B_V^h+(\psi)^h =0$ ($^h$ denotes the horizontal part) and hence that $B_V^h$ is a $\mathbb{Z}$-divisor. $(V, B_V)$ being klt, $\left\lceil{-B_V^h} \right\rceil= -B_V ^h \geq 0$. Let $B_V^v= \Sigma _j d_j P_j$ ($^v$ denotes the vertical part). We can ignore those $P_j$ such that codim $(\pi(P_j)) \geq 2$ and consider only those that map to some prime component of $B_Z$. Recall that $B_Z= \Sigma_l \delta_l Q_l$ where 
\begin{center}
$1-\delta _l = sup\{t| (Y,B_Y+t\pi^*(Q_l))$ is sub-lc over the generic point of $Q_l\}$.
\end{center}
which translates to $ \Sigma_j d_jP_j+ \pi^*(\Sigma_l(1-\delta_l)Q_l) \leq 1$. Fix $l$ and consider those $P_j$ which map to $Q_l$. Since $\pi$ is semistable in codimension $1$, this leads to $max _{P_j \rightarrow Q_l}(d_j)+1-\delta_l \leq 1$ with equality holding for some $j$. Thus $-d_j+\delta_l \geq 0$ for all $j$ such that $\pi(P_j)=Q_l$. This proves the claim.
\end{proof}
\end{claim}
Also note the above proof shows $(-B_V+ \pi^*B_Z)$ supports no fibers over codimension $1$ points of $Z$. Let $H \subset V$ be a general fiber of $\pi$. Then $(\psi|_H)+K_H=-B_H \geq 0$ (Note everything is integral here). Thus $\psi$ is a rational section of $\pi_*(\omega_{V/Z})$. Since $ \psi \rightarrow  \zeta \cdot \psi $ we conclude that $\psi$ is a rational section of $\mathcal{L}$. Let $\mathcal{K}(Z)$ be the (constant) sheaf of rational functions on $Z$.  Considering the injection $ \mathcal{L} \hookrightarrow \mathcal{K}(Z)$ that sends a rational section $e$ of $\mathcal{L}$ to $e/\psi $ shows that $\mathcal{L} \subset \mathcal{K}(Z) \cdot \psi $. \\

The above claim and (\ref{2}) show that there exists $U \subset Z$ large open such that $\mathcal{O}_Z(\mathbf{M}_Z-L_Z) \cdot \psi|_U \subset \pi_*(\omega_{V/Z})|_U$ and thus $\mathcal{O}_Z(\mathbf{M}_Z-L_Z) \cdot \psi|_U \subset \mathcal{L}|_U$. Conversely, let $a\cdot \psi $ be a  section of $\mathcal{L}$, $ a \in K(Z)$. Then it is a section of $\pi_*(\omega_{V/Z})$, so $(\pi^*(a) \cdot \psi)+K_{V/Z} \geq 0$. Since 
\begin{equation} \label{3}
  (\pi^*(a) \cdot  \psi)+K_{V/Z} = \pi^*((a)+\mathbf{M}_Z-L_Z)+ (-B_V+\pi^*B_Z)
\end{equation}
 we have $\pi^*((a)+\mathbf{M}_Z-L_Z)+ (-B_V+\pi^*B_Z) \geq 0$. Since $-B_V+\pi^*B_Z$ contains no fibers over codimension $1$ points of $Z$, this implies 
that $(a)+\mathbf{M}_Z-L_Z \geq 0$ and hence that $\mathcal{L} \subset \mathcal{O}_Z(\mathbf{M}_Z-L_Z) \cdot \psi$. We therefore conclude that 
\begin{center} 
$\mathcal{L}|_U= \mathcal{O}_Z(\mathbf{M}_Z-L_Z) \cdot \psi|_U$.
\end{center}
Since $U$ is large and $\mathcal{L}$ is a line bundle, $\mathcal{L} = \mathcal{O}_Z(\mathbf{M}_Z-L_Z) \cdot \psi$. Since $\mathcal{L}$ and $L_Z$ are nef, we conclude that $\mathbf{M}_Z$ is nef.

\end{proof}

\begin{corollary}
Let $(X, B)$ be a klt pair with $\kappa(K_X+B) \geq 0$, $H$ and $L$ nef $\mathbb{Q}$-Cartier divisors on $X$ such that $H-(K_X+B+L)$ is nef and abundant. Assume further that $\kappa(aH-(K_X+B+L)) \geq 0$ and $ \nu(aH-(K_X+B+L)) =\nu(H-(K_X+B+L))$ for some $a \in \mathbb{Q}_{>1}$. Then $H$ is semiample.

\begin{proof} Let $(X,B+L) \xleftarrow{\mu} (Y,B_Y+L_Y) \xrightarrow{f} Z$ be the standard diagram as before. Then rank $f_* \mathcal{O}_Y(\left \lceil{-B_Y} \right \rceil) =1$ by Lemma \ref{Fuj}($2$). Now the conclusion follows from Lemma \ref{Fuj}($1$) and the above theorem.

\end{proof}
\end{corollary}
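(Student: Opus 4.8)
The plan is to deduce the statement from Proposition~10 and the canonical bundle formula of Theorem~14, running the argument of Corollary~12 with Theorem~14 in place of the classical canonical bundle formula. Write $N:=H-(K_X+B+L)$, a nef and abundant $\mathbb{Q}$-divisor, and form the standard diagram of \cite[Propositions 2.1, 2.3]{Ka} associated to its Iitaka fibration:
\[
(X,B+L)\xleftarrow{\mu}(Y,B_Y+L_Y)\xrightarrow{f}Z,
\]
with $Y,Z$ smooth projective, $\mu$ birational, $f$ a contraction birational to the Iitaka fibration of $N$, $\mathrm{Supp}\,B_Y$ SNC, $L_Y=\mu^*L$, and $D_Z\in\Pic Z$ nef and big, $H_Z\in\Pic Z$ nef with $\mu^*H\sim_{\mathbb{Q}}f^*H_Z$ and $\mu^*N\sim_{\mathbb{Q}}f^*D_Z$; thus $K_Y+B_Y+L_Y\sim_{\mathbb{Q}}f^*(H_Z-D_Z)$, in particular $K_Y+B_Y+L_Y\sim_{\mathbb{Q},f}0$. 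Because $L_Y$ is merely the pullback of $L$, the boundary is determined by $K_Y+B_Y=\mu^*(K_X+B)$, so $\mathbf{A}(Y,B_Y)=\mathbf{A}(Y,B_Y+L_Y)$, the sub-pair $(Y,B_Y)$ is sub-klt since $(X,B)$ is klt, and $\lceil-B_Y\rceil\geq 0$.

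Next I would verify that $f\colon(Y,B_Y)\to Z$ together with the nef divisor $L_Y$ meets every hypothesis of Theorem~14. Sub-kltness and $K_Y+B_Y+L_Y\sim_{\mathbb{Q},f}0$ are in hand. The hypothesis $K_Y+B_Y\geq_{\mathbb{Q}}0$ is exactly where $\kappa(K_X+B)\geq 0$ enters: it makes $K_X+B$, hence $\mu^*(K_X+B)=K_Y+B_Y$, $\mathbb{Q}$-linearly equivalent to an effective divisor. For the hypothesis $\mathrm{rank}\,f_*\mathcal{O}_Y(\lceil\mathbf{A}(Y,B_Y)\rceil)=\mathrm{rank}\,f_*\mathcal{O}_Y(\lceil-B_Y\rceil)=1$ I would copy the computation from the proof of Corollary~12, with the b-divisor there replaced by the honest divisor $L$ (no perturbation of the boundary is needed now, since a nef divisor is already built into Theorem~14): writing $B_Y=\mu_*^{-1}B+F$ shows $E:=\lceil-B_Y\rceil$ is an effective $\mu$-exceptional divisor, and then for $A$ ample on $Z$ and $m\gg0$, using that $D_Z$ is big,
\[
H^0\big(Z,f_*\mathcal{O}_Y(E)\otimes A\big)\subset H^0\big(Y,\mathcal{O}_Y(E)\otimes f^*\mathcal{O}_Z(mD_Z)\big),
\]
and, since $f^*(mD_Z)\sim\mu^*(mN)$, the divisor $E$ is effective and $\mu$-exceptional, and $f_*\mathcal{O}_Y=\mathcal{O}_Z$,
\[
H^0\big(Y,\mathcal{O}_Y(E)\otimes f^*\mathcal{O}_Z(mD_Z)\big)=H^0(X,\mathcal{O}_X(mN))=H^0(Z,\mathcal{O}_Z(mD_Z)).
\]
Together with the reverse inclusion $\mathcal{O}_Z(mD_Z)\subset f_*\mathcal{O}_Y(E)\otimes\mathcal{O}_Z(mD_Z)$ (from $E\geq0$) and a comparison of leading terms of Hilbert polynomials, this forces the generic rank of $f_*\mathcal{O}_Y(E)$ to equal $1$. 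Theorem~14 then yields that the moduli b-divisor $\mathbf{M}_Z$ induced on $Z$, namely the one in $K_Y+B_Y+L_Y\sim_{\mathbb{Q}}f^*(K_Z+B_Z+\mathbf{M}_Z)$, is b-nef.

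Finally I would apply Proposition~10 to the generalized klt pair $(X,B+\overline{\mathbf{L}})$, where $\overline{\mathbf{L}}$ is the Cartier closure of $L$: it is b-nef because $L$ is nef and does not affect the singularities of $(X,B)$, so $(X,B+\overline{\mathbf{L}})$ is generalized klt with $\mathbf{M}_X=L$ and $H-(K_X+B+\mathbf{M}_X)=N$ nef and abundant satisfying the required $\kappa$ and $\nu$ conditions. The standard diagram of Proposition~10 for this pair is the one above, with trace $\mathbf{M}_Y=\mu^*L=L_Y$, and the moduli b-divisor whose b-nefness Proposition~10 requires is precisely the one just produced by Theorem~14. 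Hence Proposition~10 applies and $H$ is semiample.

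The only step that is not purely formal is the rank-one computation, and even that is a transcription of Corollary~12; the real care lies in the bookkeeping — checking that $K_Y+B_Y$ (not $K_Y+B_Y+L_Y$) equals $\mu^*(K_X+B)$, so that $\kappa(K_X+B)\geq 0$ genuinely delivers the hypothesis $K_Y+B_Y\geq_{\mathbb{Q}}0$ of Theorem~14, and identifying the moduli b-divisor supplied by Theorem~14 with the one consumed by Proposition~10 (cleanest when everything is phrased through the single generalized pair $(X,B+\overline{\mathbf{L}})$). This is exactly the point where the argument departs from Corollary~12, which instead exploited the b-semiampleness of the moduli part together with the classical canonical bundle formula in the place now taken by Theorem~14.
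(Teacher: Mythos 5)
Your proposal is correct and follows essentially the same route as the paper: form the standard Iitaka-fibration diagram, verify the rank-one condition by the computation of Corollary 12, invoke Theorem 14 (using $\kappa(K_X+B)\geq 0$ to supply $K_Y+B_Y\geq_{\mathbb{Q}}0$) to get b-nefness of the moduli part, and conclude via Proposition 10. The paper compresses all of this into three sentences; your write-up simply makes explicit the bookkeeping (in particular the identification of the moduli b-divisor of Theorem 14 with the one required by Proposition 10) that the paper leaves implicit.
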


\section{Acknowledgements}
I thank my advisor Patrick Brosnan for several enlightening discussions on the contents of this paper and Christopher Hacon, an email conversation with whom led to the start of this work. I would also like to express my gratitude to Florin Ambro, Caucher Birkar and Jason Starr for their valuable comments. I thank the referee for his comments which helped improve the presentation and readability of the paper. The author was partially supported by the Hauptman fellowship of the Mathematics department of University of Maryland during the preparation of this work.


\end{document}